\newtheorem{theorem}{Theorem}[section]
\newtheorem{proposition}[theorem]{Proposition}
\newtheorem{lemma}[theorem]{Lemma}
\theoremstyle{remark}
\newtheorem{definition}[theorem]{Definition}
\newtheorem{remark}[theorem]{Remark}
\numberwithin{equation}{section}
\begin{document}

\title[Direct images of parabolic bundles and connections]{On the direct images of 
parabolic vector bundles and parabolic connections}

\author[Indranil Biswas]{Indranil Biswas}

\address{School of Mathematics, Tata Institute of Fundamental Research,
Homi Bhabha Road, Mumbai 400005, India\\ and
Mathematics Department, EISTI-University Paris-Seine, Avenue du parc, 95000,
Cergy-Pontoise, France}

\email{indranil@math.tifr.res.in}

\author[Francois-Xavier Machu]{Francois-Xavier Machu}

\address{Mathematics Department, EISTI-University Paris-Seine, Avenue du parc, 95000, 
Cergy-Pontoise, France}

\email{fmu@eisti.eu}

\subjclass[2010]{14E20, 14J60, 53B15}

\keywords{Parabolic bundle; parabolic connection; ramified torus bundle;
parabolic direct image.}

\date{}

\begin{abstract}
Let $\varphi\, :\, Y\, \longrightarrow\, X$ be a finite surjective morphism between 
smooth complex projective curves, where $X$ is irreducible but $Y$ need not be so. Let 
$V_*$ be a parabolic vector bundle on $Y$. We construct a parabolic structure on the
direct image
$\varphi_* V$ on $X$, where $V$ is the vector bundle underlying $V_*$. The parabolic vector 
bundle $\varphi_* V_*$ on $X$ obtained this way has a ramified torus sub-bundle; it is a 
torus bundle of $\text{Ad}(\varphi_* V)$ outside the parabolic divisor for $\varphi_* V_*$ that satisfies 
certain conditions at the parabolic points. Conversely, given a parabolic vector bundle 
$E_*$ on $X$, and a ramified torus sub-bundle $\mathcal T$ for it, we construct a ramified 
covering $Z$ of $X$ and a parabolic vector bundle $W_*$ on $Z$, such that
the parabolic bundle $E_*$ is the 
direct image of $W_*$. A connection on $V_*$ produces a connection on $\varphi_* V_*$. 
The ramified torus sub-bundle for $\varphi_* V_*$ is preserved by the logarithmic 
connection on $\text{End}(\varphi_* V)$ induced by this connection on $\varphi_* V_*$. 
If the parabolic vector bundle $E_*$ on $X$ is equipped with a connection $D$ such that 
the connection on the endomorphism bundle induced by it preserves the ramified torus 
sub-bundle $\mathcal T$, then we prove that the corresponding parabolic vector bundle
$W_*$ on $Z$ has a connection that produces the connection $D$ on the direct image $E_*$.
\end{abstract}

\maketitle

\tableofcontents

\section{Introduction}\label{sec0}

Let $X$ be a smooth complex projective curve and $S$ an effective reduced divisor
on $X$. A parabolic bundle on $X$ with parabolic divisor $S$ is a vector bundle
on $X$ equipped with a filtration of subspaces of the fiber over
each point of $S$ together with a system of weights for these filtrations. Parabolic
bundles were introduced by Mehta and Seshadri \cite{MS}; parabolic vector bundles
on higher dimensional varieties were introduced by Maruyama and Yokogawa \cite{MY}.

Let $\varphi\, :\, Y\, \longrightarrow\, X$ be a ramified covering. Take a parabolic
vector bundle $V_*$ on $Y$ with underlying vector bundle $V$. We construct a
parabolic structure on the direct image $\varphi_*V$ over $X$. This is done using the
parabolic structure of $V_*$ and the ramifications of the covering map $\varphi$;
the details are in Section \ref{se2}. This construction is compatible with
other structures on parabolic bundles. For example, a connection on $V_*$ produces a connection
on the direct image $\varphi_*V_*$ (see Theorem \ref{thm1}).

Let $E_*$ be a parabolic vector bundle on $X$ with parabolic divisor $S$ and rational
parabolic weights. Denote the vector bundle underlying $E_*$ by $E$. Let
$$\text{Ad}(E)\, \subset\, \text{End}(E)\,=\, E\otimes E^*$$ be the locus of invertible endomorphisms.
A ramified torus sub-bundle for $E_*$ is a torus sub-bundle (fibers are tori)
$$
{\mathcal T}\, \subset\, \text{Ad}(E)\vert_{X\setminus S}
$$
satisfying a certain condition on $S$ (see Definition \ref{drts}). We show that
the above mentioned direct image $\varphi_*V_*$ has a natural ramified torus sub-bundle
(Proposition \ref{prop2}). If $D$ is a connection on $V_*$, then the connection
on $\varphi_*V_*$ given by $D$ is compatible with this ramified torus sub-bundle
for $\varphi_*V_*$ (Lemma \ref{lem2}).

In the reverse direction, take a parabolic vector bundle $E_*$ on $X$ with
rational parabolic weights. Take a ramified torus sub-bundle $\mathcal T$ for
$E_*$. Then there is a ramified covering $$\phi\, :\, Z\, \longrightarrow\, X$$
and a parabolic vector bundle $W_*$ on $Z$, such that $\phi_*W_*\,=\, E_*$, and the
ramified torus sub-bundle for the direct image $\phi_*W_*$ coincides with $\mathcal T$
(see Proposition \ref{prop3}). We prove the following in Theorem
\ref{thm2}:

\begin{theorem}\label{thm01}
Let $E_*$ be a parabolic vector bundle on a connected smooth complex projective curve $X$
with parabolic divisor $S$ and rational parabolic weights. Then there is a natural
equivalence between the following two classes:
\begin{enumerate}
\item Triples of the form $(Y,\, \varphi,\, V_*)$, where $\varphi\, :\, Y\, \longrightarrow
\, X$ is a ramified covering map, and $V_*$ is a parabolic vector bundle on $Y$, such that
$\varphi_*V_*\, =\, E_*$.

\item Ramified torus bundles for $E_*$.
\end{enumerate}
\end{theorem}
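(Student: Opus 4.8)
The plan is to observe that the two constructions already in hand are mutually inverse. In one direction, Proposition~\ref{prop2} assigns to a triple $(Y,\,\varphi,\,V_*)$ with $\varphi_*V_*\,=\,E_*$ its natural ramified torus sub-bundle $\mathcal{T}(Y,\varphi,V_*)\,\subset\,\text{Ad}(E)\vert_{X\setminus S}$; concretely, over the complement $U$ of $S$ and of the branch locus of $\varphi$, one has $E_x\,=\,\bigoplus_{y\in\varphi^{-1}(x)}V_y$ and $\mathcal{T}_x$ is the torus that scales these summands independently. Since this description is manifestly invariant under isomorphisms of triples over $X$, the assignment descends to isomorphism classes. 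In the other direction, Proposition~\ref{prop3} assigns to a ramified torus sub-bundle $\mathcal{T}$ a triple $(Z,\,\phi,\,W_*)$ with $\phi_*W_*\,=\,E_*$. (Here $Z$, like $Y$, is allowed to be disconnected, and ``ramified covering'' means a finite surjective morphism from a smooth curve.) It therefore suffices to show that these two assignments are inverse to each other, after which naturality will be read off from their explicit form.

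The composite $(2)\,\to\,(1)\,\to\,(2)$ is the identity: this is precisely the last assertion of Proposition~\ref{prop3}, namely that the ramified torus sub-bundle attached by Proposition~\ref{prop2} to the direct image $\phi_*W_*\,=\,E_*$ is again $\mathcal{T}$.

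The content lies in the composite $(1)\,\to\,(2)\,\to\,(1)$. Starting from $(Y,\,\varphi,\,V_*)$, put $\mathcal{T}\,=\,\mathcal{T}(Y,\varphi,V_*)$ and let $(Z,\,\phi,\,W_*)$ be the triple produced from $\mathcal{T}$ by Proposition~\ref{prop3}; I must exhibit an isomorphism $Z\,\cong\,Y$ over $X$ carrying $W_*$ isomorphically onto $V_*$. Over the open set $U$ above, the torus $\mathcal{T}_x$ carries a canonical set of weight characters, one for each point of $\varphi^{-1}(x)$, and its weight-space decomposition of $E\vert_U$ is exactly $\bigoplus_{y}V_y$. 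Hence the \'etale covering of $U$ reconstructed in Proposition~\ref{prop3} from the monodromy permuting these characters is canonically $\varphi^{-1}(U)\,\to\,U$, and under this identification $W\vert_{\varphi^{-1}(U)}\,=\,V\vert_{\varphi^{-1}(U)}$. Two things then remain: (a) extend this identification over the branch points of $\varphi$ to an isomorphism of curves $Z\,\cong\,Y$ over $X$, using that each of $Z$ and $Y$ is the smooth model of this covering with ramification prescribed --- in the one case by the degeneration of $\mathcal{T}$ along $S$ encoded in Definition~\ref{drts}, in the other by the local structure of $\varphi_*V$ near the branch points; and (b) check that the quasi-parabolic filtrations and weights of $W_*$ over $\phi^{-1}(S)$ coincide with those of $V_*$, which holds because both are forced by the parabolic structure of $E_*$ together with $\mathcal{T}$ in the way dictated by the construction of $\varphi_*V_*$ in Section~\ref{se2}.

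Steps (a) and (b) are the main obstacle. Away from $S$ and the branch locus everything is transparent, since the torus, its characters, the covering and the bundle are all the evident ones; the whole point of the condition in Definition~\ref{drts} and of the local analysis in the proofs of Propositions~\ref{prop2} and~\ref{prop3} is to pin down the local models at the bad points so that the reconstruction is forced. I would carry this out by a point-by-point comparison: at each $x\in S$ and each branch point of $\varphi$, I compare the local form of $\mathcal{T}$ arising from $\varphi_*V_*$, as computed in the proof of Proposition~\ref{prop2}, with the local datum used to build $\phi$ and $W_*$ in the proof of Proposition~\ref{prop3}, and conclude that the two determine the same covering germ and the same parabolic weights. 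Finally, the naturality of the resulting bijection between isomorphism classes of triples and ramified torus sub-bundles is inherited from the fact that both assignments are given by canonical fiberwise operations, and hence commute with all isomorphisms of the data in question.
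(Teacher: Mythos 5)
Your proposal is correct and follows essentially the same route as the paper: Propositions \ref{prop2} and \ref{prop3} furnish the two assignments, the last assertion of Proposition \ref{prop3} handles the composite $(2)\to(1)\to(2)$, and your analysis of the composite $(1)\to(2)\to(1)$ is exactly the content of the paper's Remark \ref{rem1}. One minor simplification: your step (a) is automatic, since an isomorphism of the coverings over the dense open subset $X\setminus S$ extends uniquely to an isomorphism of the smooth projective covering curves, so there is no need to match the local ramification data by hand.
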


When the torus is maximal, which means that the parabolic bundle $V_*$ in
Theorem \ref{thm01} is a line bundle, then Theorem \ref{thm01} was proved in 
\cite{AB}.

As before, $E_*$ is a parabolic vector bundle on $X$ with rational parabolic 
weights, and $\mathcal T$ is a ramified torus sub-bundle for $E_*$. Now take a 
connection on $D$ that preserves $\mathcal T$. We prove that $D$ induces a
connection $D'$ on the corresponding parabolic vector bundle $W_*$ over the
covering $Z$ such that the connection on $\phi_*W_*\,=\, E_*$ given by $D'$
coincides with $D$ (see Proposition \ref{prop4}). We prove the following
in Theorem \ref{thm3}:

\begin{theorem}\label{thm02}
The equivalence in Theorem \ref{thm01} takes a connection on the parabolic
vector bundle $V_*$ on $Y$ to a connection on $E_*$ that preserves the
ramified torus sub-bundle for $E_*$ corresponding to $(Y,\, \varphi,\, V_*)$. Conversely,
a connection on $E_*$ preserving a ramified torus sub-bundle $\mathcal T$ for $E_*$
is taken to a connection on the parabolic vector bundle $V_*$ on $Y$, where
$(Y,\, \varphi,\, V_*)$ corresponds to $\mathcal T$.
\end{theorem}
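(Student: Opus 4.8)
The plan is to deduce Theorem \ref{thm02} by combining Theorem \ref{thm1}, Lemma \ref{lem2}, Proposition \ref{prop4} and the equivalence of Theorem \ref{thm01}, and then to verify that the two resulting constructions are mutually inverse. In the direction from $Y$ to $X$: a connection $\nabla$ on $V_*$ produces, by Theorem \ref{thm1}, a connection $\varphi_*\nabla$ on $\varphi_*V_* = E_*$; by Lemma \ref{lem2} the logarithmic connection induced by $\varphi_*\nabla$ on $\text{End}(\varphi_*V)$ preserves the canonical ramified torus sub-bundle of $\varphi_*V_*$, and by Proposition \ref{prop2} this sub-bundle is exactly the ramified torus sub-bundle $\mathcal T$ that corresponds to the triple $(Y,\varphi,V_*)$ under the equivalence of Theorem \ref{thm01}. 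Thus $\nabla\mapsto\varphi_*\nabla$ is a well-defined map from connections on $V_*$ to connections on $E_*$ that preserve $\mathcal T$. In the opposite direction: given a connection $D$ on $E_*$ whose induced connection on $\text{End}(E)$ preserves a ramified torus sub-bundle $\mathcal T$, Theorem \ref{thm01} allows us to replace the triple corresponding to $\mathcal T$ by the explicit model $(Z,\phi,W_*)$ of Proposition \ref{prop3}, and then Proposition \ref{prop4} supplies a connection $D'$ on $W_*$ with $\phi_*D'=D$.

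The fact that these two maps are mutually inverse will follow once we observe that $\nabla\mapsto\varphi_*\nabla$ is injective, for an essentially formal reason. Let $S\subset X$ be the parabolic divisor of $E_*$; by the construction of Section \ref{se2}, $S$ contains the branch locus of $\varphi$ together with the $\varphi$-images of the parabolic points of $V_*$, so $\varphi$ is étale over $X\setminus S$ and $V_*$ has no parabolic structure over $\varphi^{-1}(X\setminus S)$. Over $X\setminus S$ the natural inclusion of $\mathcal O_X$-modules $\varphi_*\text{End}(V)\,\hookrightarrow\,\text{End}(\varphi_*V)$ (an $\mathcal O_Y$-linear endomorphism is in particular $\mathcal O_X$-linear) shows that two connections on $V$ inducing the same connection on $\varphi_*V$ must agree over $\varphi^{-1}(X\setminus S)$. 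Since the finite surjective map $\varphi$ restricts to a dominant map on every connected component of $Y$, the open set $\varphi^{-1}(X\setminus S)$ is dense in $Y$; and any two logarithmic connections on $V$ with poles along the parabolic divisor of $V_*$ differ by a global section of a locally free, hence torsion-free, sheaf on $Y$, so agreement on a dense open subset forces agreement on all of $Y$. Therefore a connection on $\varphi_*V$ of the form $\varphi_*\nabla$ determines $\nabla$ uniquely. Applying this to $D'$ shows that the construction $D\mapsto D'$ is independent of all choices, and together with the identity $\phi_*D'=D$ of Proposition \ref{prop4} it shows that the round-trip starting from a connection on $E_*$ is the identity; applying it to $\nabla$ itself — which is visibly one preimage of $\varphi_*\nabla$ and hence equals $(\varphi_*\nabla)'$ — shows that the round-trip starting from a connection on $V_*$ is also the identity.

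It then remains to confirm that the two maps take values among \emph{parabolic} connections: that $\varphi_*\nabla$ is compatible with the parabolic filtration of $E_*$, and that $D'$ is compatible with the parabolic filtration of $W_*$. Both are local statements at the points of $S$, respectively of $\phi^{-1}(S)$, and follow from the residue computations already present in the proofs of Theorem \ref{thm1} and Proposition \ref{prop4}, once one notes that the identification $\phi_*W_*=E_*$ of Theorem \ref{thm01} matches parabolic weights on the two sides. Naturality of the resulting enhanced equivalence — compatibility with isomorphisms of triples $(Y,\varphi,V_*)$ and with isomorphisms of ramified torus sub-bundles — is automatic, since the operations involved ($\varphi_*$, passage to the endomorphism bundle, and the model $(Z,\phi,W_*)$) are all functorial in their arguments.

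I expect the main obstacle to be the local bookkeeping at the ramification and parabolic points that is needed to make the ``determined over $X\setminus S$, hence everywhere'' principle fully rigorous: one must keep precise track of which divisor $\varphi_*\nabla$ has its logarithmic poles along, and of how the parabolic weights of $V_*$, the ramification indices of $\varphi$, and the parabolic weights of $E_*$ are related. These relations are exactly those recorded in Section \ref{se2}, so the task is one of assembling cited facts rather than of new estimates; the single point requiring a genuinely new verification is that the condition at the points of $S$ in Definition \ref{drts} is preserved under all the identifications used above, which is what links the connection-level statement to the torus-sub-bundle-level content of Theorem \ref{thm01}.
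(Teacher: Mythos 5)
Your proposal is correct and follows essentially the same route as the paper: Theorem \ref{thm1}, Lemma \ref{lem2} and Proposition \ref{prop2} give the forward direction, Proposition \ref{prop4} gives the converse, and the check that the two constructions are mutually inverse is exactly the content of the paper's Remark \ref{rem2}, which, like you, argues that the two connections on $V_*$ agree over the dense open subset $\varphi^{-1}(X\setminus S)$ and hence everywhere. Your reformulation of this last step as injectivity of $\nabla\,\longmapsto\,\varphi_*\nabla$ (via torsion-freeness of the sheaf in which the difference of two connections lives) is just a slightly more explicit packaging of the same density argument, so no genuinely new ingredient is added or missing.
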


When there is no parabolic structure and the coverings are unramified, then Theorem 
\ref{thm01} and Theorem \ref{thm02} were proved in \cite{BC}. Direct images of vector 
bundles under finite maps of curves are also studied in \cite{DP}.

\section{Parabolic vector bundles and connections}\label{sec1}

\subsection{Parabolic vector bundles}

Let $X$ be an irreducible smooth complex projective curve. Take an algebraic vector 
bundle $E$ over $X$ of rank $r$. The fiber of $E$ over any closed point $z\, \in\, X$ 
will be denoted by $E_z$.

A \textit{quasi-parabolic} structure on $E$ consists of the following data:
\begin{itemize}
\item a finite set of distinct closed points $S\,=\, \{x_1,\, \cdots,\, x_n\}\,\subset\,
X$, and

\item for each $x_i\, \in\, S$, a filtration of subspaces
$$
E_{x_i}\,=\, F^i_1 \, 
\supsetneq\, F^i_2\, \supsetneq\, \cdots \, \supsetneq\, F^i_{\ell_i}\, \supsetneq\, 
F^i_{\ell_i+1}\,=\, 0\, .
$$
\end{itemize}
The above subset $S$ is called the \textit{parabolic divisor}. A \textit{quasi-parabolic 
bundle} is a vector bundle equipped with a quasi-parabolic structure. A 
\textit{parabolic vector bundle} is a quasi-parabolic vector bundle $(E,\, S,\, 
\{\{F^i_j\}^{j=\ell_i}_{j=1}\}_{i=1}^n)$ as above together with real numbers 
$\lambda^i_j$, $1\,\leq\, j\,\leq\, \ell_i$, $1\,\leq\, i\, \leq\, n$, such that
$$
0\, \leq \, \lambda^i_1\, <\, \lambda^i_2\, 
<\, \cdots\, < \, \lambda^i_{\ell_i-1}\, < \, \lambda^i_{\ell_i} \, <\, 
\lambda^i_{\ell_i+1}\,=\, 1\, .
$$
These numbers $\lambda^i_j$ are called the \textit{parabolic weights}. For notational 
convenience, a parabolic vector bundle $(E,\, S,\, \{\{(F^i_j,\, \lambda^i_j) 
\}^{j=\ell_i}_{j=1}\}_{i=1}^n)$ is abbreviated as $E_*$. See \cite{MS}, \cite{MY} for 
more on parabolic vector bundles.

For any $x_i\, \in\, S$, and any real number $0\, \leq\, c\, \leq\, 1$, define 
\begin{equation}\label{f1}
E^c_{x_i}\,:=\, F^i_{j(c)}\, \subset\, E_{x_i} \, , 
\end{equation}
where $1\, \leq\, j(c)\, \leq\, \ell_i+1$ is the smallest integer such 
that $c\, \leq \, \lambda^i_{j(c)}$. Evidently, the parabolic structure of $E_*$ at 
the parabolic point $x_i$ is uniquely determined by the weighted subspaces 
$\{E^c_{x_i}\}_{0\leq c\leq 1}$ of $E_{x_i}$. We note that in \cite[p.~80]{MY}, the 
parabolic structure is in fact defined this way.

The parabolic degree of $E_*$, which is denoted by $\text{par-deg}(E_*)$, is defined 
to be \begin{equation}\label{pdeg}
\text{par-deg}(E_*)\,:=\, 
\text{degree}(E)+\sum_{i=1}^n\sum_{j=1}^{\ell_i}\lambda^i_j \cdot \dim 
F^i_j/F^i_{j+1}
\end{equation}
\cite[p.~214, Definition~1.11]{MS}, \cite[p.~78]{MY}.

The notions tensor product, dualization and homomorphism bundles for usual vector 
bundles extend to the context of parabolic vector bundles \cite{Yo}, \cite{Bi2}. More 
precisely, for any parabolic vector bundles $E_*$ and $E'_*$, there are the parabolic 
tensor product bundle $E_*\otimes E'_*$, parabolic dual bundle $E^*_*$ and parabolic 
homomorphism bundle $\text{Hom}(E_*,\, E'_*) \,=\, E'_*\otimes E^*_*$; there are all 
parabolic vector bundles (see \cite{Yo}, \cite{Bi2} for their construction). However, we 
will recall below the description of the vector bundle underlying the parabolic 
bundle $\text{End}(E_*)\,=\, \text{Hom}(E_*,\, E_*) \,=\, E_*\otimes E^*_*$.

As before, let $E$ be the vector bundle underlying the parabolic bundle $E_*$. Consider the 
vector bundle $$\text{End}(E)\,=\, \text{Hom}(E,\, E) \,=\, E\otimes E^*\, .$$ Let
\begin{equation}\label{e1}
\text{End}^p(E)\, \subset\, \text{End}(E)
\end{equation}
be the sub-sheaf such that for any section $$s\, \in\, \Gamma (U,\,
\text{End}^p(E)) \, \subset\, \Gamma (U,\, \text{End}(E))$$ defined over an open subset
$U\, \subset\, X$, we have
$$
s(x_i)(F^i_j)\, \subset\, F^i_j
$$
for all $x_i\, \in S\bigcap U$ and $1\,\leq\, j\,\leq\, \ell_i$, where $S$ as before denotes the
parabolic divisor for $E_*$. Note that the inclusion of $\text{End}^p(E)$ in $\text{End}(E)$
is an isomorphism over the complement $X\setminus S$.

The vector bundle underlying the parabolic bundle $\text{End}(E_*)$ is
$\text{End}^p(E)$ constructed in \eqref{e1}.

Let $Y$ be a smooth complex projective curve which need not be connected. By a 
vector bundle on $Y$ we will mean a vector bundle on every connected component of 
$Y$. Note that we do not assume that they all have the same rank independent of the 
components. Similarly, by a parabolic bundle on $Y$ we will mean a parabolic vector 
bundle on every connected component of $Y$; their rank is allowed to be different 
on different connected components.

\subsection{Connections on a parabolic vector bundle}

The cotangent line bundle of $X$ will be denoted by $K_X$.

As before, let $E_*$ be a parabolic vector bundle with parabolic divisor $S$ such 
that the underlying vector bundle is $E$. The line bundle $K_X\otimes {\mathcal 
O}_X(S)$ will be denoted by $K_X(S)$ for notational
convenience.

We note that for any point $y\, \in\, S$, the 
fiber $K_X(S)_y$ is identified with $\mathbb C$ by sending any meromorphic $1$-form defined around 
$y$ to its residue at $y$. More precisely, for any holomorphic coordinate function 
$z$ on $X$ defined around the point $y$, consider the homomorphism
\begin{equation}\label{e2}
R_y\, :\, K_X(S)_y\, \longrightarrow\, {\mathbb C}\, ,\ \ c\cdot \frac{dz}{z} \, \longmapsto\, c\, .
\end{equation}
This homomorphism is in fact independent of the choice of the above coordinate function $z$.

A \textit{logarithmic connection} on $E$ singular over $S$ is an algebraic 
differential operator of order one
$$
D\, :\, E\, \longrightarrow\, E\otimes K_X(S)
$$
such that $D(fs) \,=\, f D(s) + s\otimes df$ for all locally defined algebraic function $f$ and
locally defined algebraic section $s$ of $E$.

For a logarithmic connection $D$ on $E$ singular over $S$, and a point $y\, \in\, S$, consider
the composition
$$
E\, \stackrel{D}{\longrightarrow}\, E\otimes K_X(S)\, \stackrel{\text{Id}_E\otimes
R_y}{\longrightarrow}\, E_y\otimes{\mathbb C}\,=\, E_y\, ,
$$
where $R_y$ is the homomorphism in \eqref{e2}. This composition homomorphism evidently vanishes
on the sub-sheaf $E\otimes {\mathcal O}_X(-y)\, \subset\, E$, and hence it produces a homomorphism
\begin{equation}\label{e3}
\text{Res}(D,y)\, :\, E/(E\otimes {\mathcal O}_X(-y))\,=\, E_y\, \longrightarrow\, E_y\, .
\end{equation}
The homomorphism $\text{Res}(D,y)$ in \eqref{e3} is called the \textit{residue} of the
connection $D$ at the point $y$; see \cite[p.~53]{De}.

\begin{definition}\label{dlc}
A \textit{connection} on $E_*$ is a logarithmic connection $D$ on $E$, singular over
$S$, such that
\begin{enumerate}
\item $\text{Res}(D,x_i)(F^i_j)\, \subset\, F^i_j$ for all $1\,\leq\, j\,\leq\, \ell_i$,
$1\,\leq\, i\, \leq\, n$, and

\item the endomorphism of $F^i_j/F^i_{j+1}$ induced by $\text{Res}(D,x_i)$ coincides with
multiplication by the parabolic weight $\lambda^i_j$.
\end{enumerate}
(See \cite[Section~2.2]{BL}.)
\end{definition}

If $D$ is a logarithmic connection on $E$ singular over $S$, then
$$
\text{degree}(E)\,=\, - \sum_{i=1}^n\text{trace}(\text{Res}(D,x_i))
$$
\cite[p.~16, Theorem~3]{Oh}. From this, and the definition of parabolic degree
reproduced in \eqref{pdeg}, it follows immediately that if a parabolic bundle $E_*$ 
admits a connection, then
$$
\text{par-deg}(E_*)\,=\,0\, .
$$
Conversely, an indecomposable parabolic vector bundle of parabolic degree zero
admits a connection \cite[Proposition 4.1]{BL}.

A connection on $E_*$ induces a connection on the parabolic dual $E^*_*$. Also,
if $E_*$ and $E'_*$ are equipped with connections, then there are induced connections
on $E_*\otimes E'_*$ and $\text{Hom}(E_*,\, E'_*)$. In particular, a connection
on $E$ induces a connection on the parabolic endomorphism bundle $\text{End}(E_*)$.

Let $Y$ be a smooth complex projective curve which need not be connected, and let $E_*$
be a parabolic vector bundle on $Y$. Then a connection on $E_*$ is defined to be a
connection on the parabolic vector bundle over each connected component of $Y$.

\section{Direct image of a parabolic vector bundle}\label{se2}

Let $Y$ be a smooth complex projective curve which need not be connected.
As before, the curve $X$ is irreducible. Let
\begin{equation}\label{vp}
\varphi\, :\, Y\, \longrightarrow\, X
\end{equation}
be a finite morphism.

Take a parabolic vector bundle $V_*$ on $Y$ of positive rank. The
vector bundle underlying $V_*$ will be denoted by $V$. Note that the
direct image $\varphi_*V\,\longrightarrow\, X$ is a vector bundle, because
$\varphi$ is a finite morphism so the first direct image vanishes. We will
construct a parabolic structure on the vector bundle $\varphi_*V$.

Since $\varphi$ is a finite morphism, there are only finitely points of $Y$ where $\varphi$ 
fails to be smooth. Let
\begin{equation}\label{dr}
R\, \subset\, Y
\end{equation}
be the finite subset where the map $\varphi$ is ramified. Let $P\, \subset\, Y$ be the parabolic 
divisor for the parabolic vector bundle $V_*$. The parabolic divisor for the parabolic structure 
on $\varphi_*V$ is the image $\varphi(R\bigcup P)$. Take a point $x\, \in\, \varphi(R\bigcup 
P)\setminus \varphi(R)$ in the complement of $\varphi(R)\,\subset\, \varphi(R\bigcup P)$. Then
we have
$$
(\varphi_* V)_x\,=\, \bigoplus_{y\in \varphi^{-1}(x)} V_y\, .
$$
Since $x\, \in\, \varphi(R\bigcup P)\setminus \varphi(R)$, some points of $\varphi^{-1}(x)$ are
parabolic points for $V_*$. If $y\,\in\, \varphi^{-1}(x)$ is not a parabolic point of $V_*$,
equip $V_y$ with the trivial quasi-parabolic filtration
$$
V_{y}\,=\, V^1_{y} \, \supsetneq\, V^2_{y}\,=\,0
$$
and the trivial parabolic weight $\lambda_1\,=\, 0$.

Now for any real number $0\, \leq\, c\,\leq\, 1$, define
$$
(\varphi_* V)^c_x\, :=\, \bigoplus_{y\in \varphi^{-1}(x)} V^c_y
\, \subset\, \bigoplus_{y\in \varphi^{-1}(x)} V_y\,=\, (\varphi_* V)_x\, ,
$$
where the subspace $V^c_y\, \subset\, V_y$ is defined in \eqref{f1}. As mentioned following
\eqref{f1}, this filtration of subspaces $\{(\varphi_* V)^c_x\}_{0\leq c\leq 1}$ of
$(\varphi_* V)_x$ produces a parabolic structure on $\varphi_*V$ at the
point $x$ (see \cite[p.~80]{MY}).

Now take any $x\, \in \,\varphi(R)$, where $R$ is the divisor in \eqref{dr}. Let
$$\varphi^{-1}(x)_{\rm red} \,=\, \{y_1,\,\cdots,\, y_m\}\, \subset\, Y$$ be
the reduced inverse image of $x$ under $\varphi$, so $y_1,\,\cdots,\, y_m$ are distinct
points of $Y$. The multiplicity of $\varphi$ at $y_k$, $1\,\leq\, k\, \leq\, m$, will be
denoted by $b_k$; therefore, we have $$\varphi^{-1}(x)\,=\, \sum_{k=1}^m b_ky_k\, .$$

The projection formula says that
$$
\varphi_* (V\otimes {\mathcal O}_Y(-\sum_{j=1}^m b_jy_j))\,=\,
(\varphi_* V)\otimes {\mathcal O}_X(-x)\, .
$$
Hence the composition of homomorphisms
\begin{equation}\label{f2}
\varphi_* (V\otimes {\mathcal O}_Y(-\sum_{j=1}^m b_jy_j))\, \longrightarrow\,
\varphi_* V\, \longrightarrow\, (\varphi_* V)_x
\end{equation}
vanishes identically, where the first homomorphism is the natural inclusion map, and the
second homomorphism is the restriction of the vector bundle
$\varphi_* V$ to its fiber $(\varphi_* V)_x$ over $x$.
For every $1\, \leq\, k\, \leq\, m$, let
\begin{equation}\label{f3}
W_k\, \subset\,(\varphi_*V)_x
\end{equation}
be the image of the composition homomorphism
$$
\varphi_* (V\otimes {\mathcal O}_Y(-\sum_{j=1, j\not= k}^m b_jy_j))\, \longrightarrow\,
\varphi_* V\, \longrightarrow\, (\varphi_* V)_x\, ,
$$
where the first homomorphism is the natural inclusion map, and the
second homomorphism is the one in \eqref{f2}. It is straight-forward
to check that
$$
\dim W_k\,=\, b_k\cdot \dim V_{y_k}\, ,
$$
and
\begin{equation}\label{g1}
(\varphi_* V)_x\,=\, \bigoplus_{k=1}^m W_k\, .
\end{equation}
We will construct a weighted filtration on each of these vector spaces $W_k$; after that,
all these weighted filtrations combined together will give the weighted
filtration of $(\varphi_*V)_x$ using \eqref{g1}, which in turn
would define the parabolic structure on $\varphi_*V$ over the point $x$.

For each integer $0\,\leq\, \ell\,\leq\, b_k$, let
$$F^k_\ell\,\subset\, (\varphi_*V)_x$$ be the image of the composition homomorphism
\begin{equation}\label{g2}
\varphi_* (V\otimes {\mathcal O}_Y(-\ell\cdot y_k -\sum_{j=1, j\not= k}^m b_jy_j))\,
\longrightarrow\, \varphi_*V\, \longrightarrow\, (\varphi_* V)_x\, ,
\end{equation}
where the first homomorphism is the natural inclusion map, and the
second homomorphism is the one in \eqref{f2}.
Note that $F^k_{b_k} \,=\, 0$ (as the composition homomorphism
in \eqref{f2} vanishes), and $F^k_0 \,=\, W_k$ (constructed
in \eqref{f3}); in particular, we have $F^k_\ell\,
\subset\, W_k$ for all $0\,\leq\, \ell\,\leq\, b_k$. Therefore, we have a filtration of subspaces
\begin{equation}\label{f4}
W_k\,=\,F^k_0 \, \supset\, F^k_1\, \supset\, F^k_2\, \supset\, \cdots\, \supset\,
F^k_\ell \, \supset\, \cdots\, \supset\, F^k_{b_k-1}\, \supset\, F^k_{b_k}\,=\, 0\, .
\end{equation}

We will now show that every $0\, \leq\, \ell\, \leq\, b_k-1$,
the quotient space $F^k_\ell/F^k_{\ell+1}$ in \eqref{f4} is canonically
identified with the tensor product $V_{y_k}\otimes (K^{\otimes \ell}_Y)_{y_k}$, where $V_{y_k}$ and
$(K^{\otimes \ell}_Y)_{y_k}$ are the fibers, over the point $y_k\, \in\, \varphi^{-1}(x)$, of
$V$ and $K^{\otimes \ell}_Y$ respectively ($K_Y$ is the holomorphic cotangent bundle
of $Y$).

Take a holomorphic function $\beta$ defined on a sufficiently small open neighborhood $U$ of $y_k$ in $Y$ such that
$\beta(y_k) \,=\, 0$ and $d\beta (y_k)\, \not=\, 0$. We take $U$ such that
$U\bigcap R\,=\, y_k$, where $R$ is the ramification divisor in \eqref{dr}. Take any $v\,\in\, V_{y_k}$.
Let $s_v$ be a holomorphic section of $V$, defined on the neighborhood $U$ of $y_k$ in $Y$,
such that $$s_v(y_k) \,=\, v\, .$$ Now $\beta^\ell\cdot s_v$ defines a holomorphic section
of $\varphi_* (V\otimes {\mathcal O}_Y(-\ell\cdot y_k -\sum_{j=1, j\not= k}^m b_jy_j))$
over the open subset $\varphi(U)\, \subset\, X$. Let
\begin{equation}\label{1g6}
f'_{\beta,s_v}\,\in\,
\varphi_* (V\otimes {\mathcal O}_Y(-\ell\cdot y_k -\sum_{j=1, j\not= k}^m b_jy_j))_x
\end{equation}
be the evaluation, at the point $x\, \in\, X$, of this section of the
vector bundle $\varphi_* (V\otimes {\mathcal O}_Y(-\ell\cdot y_k -\sum_{j=1, j\not=
k}^m b_jy_j))$ over $X$. Since $F^k_\ell$ is the image of the
composition in \eqref{g2}, it follows that $F^k_\ell$ is a quotient of the fiber
$\varphi_* (V\otimes {\mathcal O}_Y(-\ell\cdot y_k -\sum_{j=1, j\not= k}^m b_jy_j))_x$.
So $f'_{\beta,s_v}$, constructed in \eqref{1g6}, produces an element of $F^k_\ell$. Let
\begin{equation}\label{g6}
f_{\beta,s_v}\, \in\, F^k_\ell/F^k_{\ell+1}
\end{equation}
be the image of $f'_{\beta,s_v}$ in the quotient space 
$F^k_\ell/F^k_{\ell+1}$ in \eqref{f4}.

We will first show that $f_{\beta,s_v}$ in \eqref{g6} is
independent of the choice of the section $s_v$ passing through $v$.

If $t_v$ is another holomorphic section of $V$, defined on $U$,
such that $t_v(y_k) \,=\, v$. Then $(s_v-t_v)(y_k)\,=\, 0$, and therefore,
the section $\beta^\ell\cdot (s_v-t_v)$ vanishes at $y_k$ of order
at least $\ell+1$. This implies that the section of
$\varphi_* (V\otimes {\mathcal O}_Y(-\ell\cdot y_k -\sum_{j=1, j\not= k}^m b_jy_j))$
over $\varphi(U)$ defined by $\beta^\ell\cdot (s_v-t_v)$ lies
in the image of the inclusion map
\begin{equation}\label{g5}
\varphi_* (V\otimes {\mathcal O}_Y(-(\ell+1)\cdot y_k -\sum_{j=1, j\not= k}^m b_jy_j))_x
\, \hookrightarrow\,\varphi_* (V\otimes {\mathcal O}_Y(-\ell\cdot y_k -\sum_{j=1, j\not= k}^m b_jy_j))\, .
\end{equation}

Consider the element of the fiber $\varphi_* (V\otimes {\mathcal O}_Y(-\ell\cdot y_k -
\sum_{j=1, j\not= k}^m b_jy_j))_x$ obtained by 
evaluating, at the point $x$, of the above section of the vector
bundle $$\varphi_* (V\otimes {\mathcal O}_Y(-\ell\cdot y_k -\sum_{j=1, j\not= k}^m b_jy_j))$$
defined by $\beta^\ell\cdot (s_v-t_v)$. Since $\beta^\ell\cdot (s_v-t_v)$ lies in the
sub-sheaf in \eqref{g5}, it follows that the image of this element of
$\varphi_* (V\otimes {\mathcal O}_Y(-\ell\cdot y_k -
\sum_{j=1, j\not= k}^m b_jy_j))_x$ in $F^k_\ell$ actually lies in the
subspace $F^k_{\ell+1}\, \subset\, F^k_\ell$.
Therefore, the element $f_{\beta,s_v}$ in \eqref{g6} coincides with $f_{\beta,t_v}$
constructed similarly. Hence it follows that $f_{\beta,s_v}$ does not depend on the 
choice of the section $s_v$ passing through $v$.

Regarding the dependence of $f_{\beta,s_v}$ on $\beta$, 
we will now show that $f_{\beta,s_v}$ depends only on $(d\beta (y_k))^{\otimes\ell}
\,\in\, (K^{\otimes\ell}_Y)_{y_k}$.

To prove this, take another holomorphic function $\beta_1$ defined around
$y_k$ such that $\beta_1(y_k) \,=\, 0$ and $(d\beta_1 (y_k))^{\otimes \ell}\,=\,
(d\beta(y_k))^{\otimes \ell}$.
Then the function $\beta^\ell - \beta^\ell_1$ vanishes at $y_k$ of order at least
$\ell+1$. Consequently, the local section $(\beta^\ell-\beta^\ell_1)\cdot s_v$
of $V$ vanishes at $y_k$ of order at least $\ell+1$. Form this it is straight-forward
to deduce that 
$$
f_{\beta,s_v}\,=\, f_{\beta_1,s_v}\, \in\, F^k_\ell/F^k_{\ell+1}\, ,
$$
where $f_{\beta_1,s_v}$ is constructed as done in \eqref{g6}.

Consequently, for every $0\,\leq\, \ell\,<\, b_k$, we get a homomorphism
\begin{equation}\label{nu}
\nu(y_k, \ell)\, :\, V_{y_k}\otimes (K^{\otimes \ell}_Y)_{y_k}\, \longrightarrow\,
F^k_\ell/F^k_{\ell+1}\, ,\ \ v\otimes w\, \longmapsto\, f_{\beta,s_v}\, ,
\end{equation}
where $w\,=\, d\beta (y_k)^{\otimes \ell}$ and $s_v(y_k)\,=\, v$. The homomorphism
$\nu(y_k, \ell)$ in \eqref{nu} is evidently an isomorphism.

Now, this $y_k$ may be a parabolic point of $V_*$, in which case $V_{y_k}$ would have a
quasi-parabolic filtration. If $y_k$ is a parabolic point of $V_*$, let
\begin{equation}\label{f5}
V_{y_k}\,=\, V^1_{y_k} \, \supsetneq\, V^2_{y_k} \, \supsetneq\, \cdots \, \supsetneq\,
V^{l(y_k)}_{y_k} \, \supsetneq\,V^{l(y_k)+1}_{y_k}\,=\, 0
\end{equation}
be the quasi-parabolic filtration of the fiber $V_{y_k}$, and let 
\begin{equation}\label{e4}
0\, \leq \, \lambda^1_{y_k}\, < \, \cdots \, < \, \lambda^{l(y_k)}_{y_k} \, < \,
\lambda^{l(y_k)+1}_{y_k} \, =\, 1
\end{equation}
be the corresponding system of parabolic weights.

If $y_k$ is not a parabolic point of $V_*$, then we
equip $V_{y_k}$ with the trivial quasi-parabolic filtration
$$
V_{y_k}\,=\, V^1_{y_k} \, \supsetneq\, V^2_{y_k}\,=\,0 
$$
and the trivial parabolic weight $\lambda^1_{y_k}\,=\, 0$. This way we would not need
to distinguish between parabolic $y_k$ and non-parabolic $y_k$.

Using the isomorphism $\nu(y_k, \ell)$ in \eqref{nu}, the filtration in \eqref{f5} 
produces a filtration of subspaces $F^k_\ell/F^k_{\ell+1}$ for every $0\, \leq\, \ell\, 
\leq\, b_k-1$, because $(K^{\otimes \ell}_Y)_{y_k}$ is a line, so linear subspaces of 
$V_{y_k}$ are in a canonical bijection with the linear subspaces of $V_{y_k}\otimes 
(K^{\otimes \ell}_Y)_{y_k}$. This filtration of each $F^k_\ell/F^k_{\ell+1}$, $0\, \leq\, 
\ell\, \leq\, b_k-1$, and the filtration of $W_k$ in \eqref{f4} together produce a finer 
filtration $W_k$, while the parabolic weights in \eqref{e4} produce weights for the terms 
of this finer filtration of $W_k$.

More precisely, the length of this finer filtration of $W_k$
$$
W_k\,=\, S_1\, \supsetneq\, S_2\, \supsetneq\, \cdots\, \supsetneq\,
S_{l(y_k)b_k-1}\, \supsetneq\, S_{l(y_k)b_k} \, \supsetneq\, S_{l(y_k)b_k+1}\, =\, 0
$$
is $l(y_k)b_k$, and for every $1\, \leq\, i\,
\,\leq\, l(y_k)b_k$, the $i$-th term $S_i$, and its weight, are constructed as
follows: First write $i\,=\, c\cdot l(y_k) + d$, where $c$ is a nonnegative integer and
$1\,\leq\, d\,\leq\, l(y_k)$.
\begin{itemize}
\item If $d\,=\, 1$, then set $$S_i\,:=\, F^k_c$$
(see \eqref{f4}), and set the weight of $S_i$ to be $(c+\lambda^1_{y_k})/b_k$,
where $\lambda^1_{y_k}$ is the weight in \eqref{e4}.

\item If $2\, \leq\, d\, \leq\, l(y_k)$, then $S_i$ satisfies the two
conditions
\begin{enumerate}
\item{} $F^k_{c+1}\, \subsetneq\, S_i\, \subsetneq\, F^k_c$, and

\item{} the subspace $S_i/F^k_{c+1}\,\subset\, F^k_c/F^k_{c+1}\,=\, V_{y_k}\otimes
K^{\otimes c}_Y$ (it is the isomorphism
$\nu(y_k, c)$ in \eqref{nu}) coincides with the subspace $V^d_{y_k}\otimes
K^{\otimes c}_Y \, \subset\, V_{y_k}\otimes K^{\otimes c}_Y$
(see \eqref{f5} for $V^d_{y_k}$).
\end{enumerate}
The above two conditions evidently determine $S_i$ uniquely. The weight of $S_i$ is
set to be
\begin{equation}\label{dk}
(c+\lambda^d_{y_k})/b_k\, ,
\end{equation}
where $\lambda^d_{y_k}$ is the weight in \eqref{e4}.
\end{itemize}

Now that we have the weighted filtration of $W_k$ constructed above, using the 
decomposition in \eqref{g1} we construct a parabolic structure on $(\varphi_* V)_x$ in 
the obvious way: The subspace of $(\varphi_* V)_x$ corresponding to any $0\, \leq\, c\, 
\leq\, 1$ is simply the direct sum of the subspaces of $W_k$, $1\, \leq\, k\, \leq\, m$, 
corresponding to the same $c$.

Therefore, we get a parabolic structure on the vector bundle $\varphi_* V$ over $X$.
This parabolic bundle over $X$ will be denoted by $\varphi_*V_*$.

\section{Direct image of parabolic bundles and ramified torus sub-bundles}\label{se3}

\subsection{Ramified torus sub-bundle}\label{se2.2}

Let $V_0$ be a finite dimensional complex vector space. Consider the group $\text{GL}(V_0)$ 
consisting of all linear automorphisms of $V_0$; it is a complex affine algebraic group. A torus 
in $\text{GL}(V_0)$ is a complex algebraic subgroup of $\text{GL}(V_0)$ isomorphic 
to a product of copies of the multiplicative group ${\mathbb C}^{\times}\,=\, {\mathbb 
C}\setminus\{0\}$. Given a torus $T_0\, \subset\, \text{GL}(V_0)$ consider the isotypical
decomposition of $V_0$ for the standard action of $T_0$ on $V_0$; it consists of finitely many
distinct characters $\chi_1,\, \cdots,\, \chi_n$ of $T_0$ and a decomposition 
$$
V_0\,=\, \bigoplus_{j=1}^n V_{0,j}
$$
such that any $t\, \in\, T_0$ acts on $V_{0,j}$ as multiplication by $\chi_j(t)\,\in\,
{\mathbb C}^{\times}$. The centralizer of $T_0$ in $\text{GL}(V_0)$ will be denoted by
$C(T_0)$. It is straight-forward to see that $C(T_0)$ consists of all $\alpha\, \in\,
\text{GL}(V_0)$ such that $\alpha(V_{0,j})\,=\, V_{0,j}$ for every $1\, \leq\,j\, \leq\, n$.

Conversely, given a direct sum decomposition
\begin{equation}\label{c1}
V_0\,=\, \bigoplus_{j=1}^m B_j\, ,
\end{equation}
let ${\mathbb L}\, \subset\,\text{GL}(V_0)$ be the subgroup consisting of all $\alpha\, \in\,
\text{GL}(V_0)$ such that $\alpha(B_j)\,=\, B_j$ for every $1\, \leq\,j\, \leq\, m$.
Then the center of $\mathbb L$ is a torus in $\text{GL}(V_0)$. The decomposition of
$V_0$ corresponding to this torus coincides with the one in \eqref{c1}.

Let $Z$ be a smooth complex algebraic curve; it need not be projective or connected.
Take an algebraic vector bundle $W$ on $Z$. Consider
the vector bundle $\text{ad}(W)\,=\, \text{End}(W)\,=\, W\otimes W^*$.
We have a function on the total space of $\text{ad}(W)$ given by the determinant of
endomorphisms. Let
$$
\text{Ad}(W)\, \subset\, \text{ad}(W)
$$
be the Zariski open subset where the determinant is nonzero, so $\text{Ad}(W)$
parametrizes the automorphisms. Let
$$
p_0\, :\, \text{Ad}(W)\,\longrightarrow\, Z
$$
be the projection. So the fiber $\text{Ad}(W)_y\,=\, p^{-1}_0(y)$ of $\text{Ad}(W)$
over any $y\, \in\, Z$ is the space of all linear automorphisms of the fiber $W_y$.
Note that $\text{Ad}(W)$ is smooth group-scheme over $Z$.

\begin{definition}
A \textit{torus sub-bundle} of $\text{Ad}(W)$ is a smooth abelian semi-simple subgroup scheme 
over $Z$ $${\mathcal T}\,\subset\, \text{Ad}(W)\,.$$
In other words, the restriction of 
$p_0$ to $\mathcal T$ is smooth, and for every $y\, \in\, Z$, the fiber ${\mathcal 
T}_y$ is a torus in $\text{Ad}(W)_y$.
\end{definition}

Note that the dimension of ${\mathcal T}_y$ 
may depend on the component of $Z$ in which $y$ lies. In other words, the connected 
components of $\mathcal T$ are allowed to have different dimensions.

As mentioned earlier, we have the isotypical decomposition of $W_y$ for the
standard action of ${\mathcal T}_y$ on $W_y$. It should be noted that these point-wise
decomposition of the fibers of $W$ does not produce a decomposition of
the vector bundle $W$ because
the direct summands may get interchanged as $y$ runs over $Z$. To gave such an example,
take $X$ to be an irreducible smooth complex projective curve of sufficiently large
genus. Let $P_m$ be the group of permutations of $\{1,\, 2,\, 3,\, \cdots,\,m\}$, with
$m\, \geq\, 3$; it acts on ${\mathbb C}^m$ by permuting the elements
of the standard basis of ${\mathbb C}^m$.
Let $$\rho\, :\, \pi_1(X,\, z_0)\,\longrightarrow\, P_m$$
be a surjective homomorphism, and let $W$ be the flat vector bundle on $X$
of rank $m$ associated to the composition homomorphism
$$
\pi_1(X,\, z_0)\,\stackrel{\rho}{\longrightarrow}\, P_m\, \hookrightarrow\,
\text{GL}(m, {\mathbb C})\, .
$$
Each fiber of $W$ has a canonical decomposition into an unordered direct sum of lines.
However, the vector bundle $W$ is not a direct sum of line bundles. In fact, it is the direct sum of the
trivial line bundle on $X$ and an indecomposable bundle of rank $m-1$.

Now let $W_*$ be a parabolic vector bundle $Z$. The underlying vector bundle
is denoted by $W$, and the parabolic divisor is $\{z_1,\, \cdots, \, z_n\}$.
For each $1\, \leq\, i\, \leq\, n$, the quasi-parabolic filtration on $W_{z_i}$ is
$$
W_{z_i}\,=\, F^i_1 \, \supsetneq\, F^i_2\, \supsetneq\, \cdots \, \supsetneq\,
F^i_{\ell_i}\, \supsetneq\, F^i_{\ell_i+1}\,=\, 0\, ,
$$
while the corresponding parabolic weights are
$$
0\, \leq \, \lambda^i_1\, <\, \lambda^i_2\, <\, \cdots\, < \,
\lambda^i_{\ell_i-1}\, < \, \lambda^i_{\ell_i} \, <\, \lambda^i_{\ell_i+1}\,=\, 1\, .
$$

We assume that all the parabolic weights $\lambda^i_j$ are rational numbers.

There is an equivalence of categories between the parabolic vector bundles with rational
parabolic weights and the equivariant bundles on some ramified Galois covering \cite{Bi1},
\cite{Bo1}. We very briefly recall the construction of this equivalence of categories.

Let $\gamma^0\, :\, Z_1\, \longrightarrow\, Z$ be a (possibly) ramified Galois covering with Galois
group $\Gamma_0\,=\, \text{Gal}(\gamma^0)$. Let $D_0\, \subset\, Z_1$ be the reduced effective divisor 
where the map $\gamma^0$ is ramified. Let $W$ be a $\Gamma_0$--equivariant vector bundle on
$Z_1$. The group $\Gamma_0$ acts on the direct image $\gamma^0_* W$. Consider the vector bundle
$$
(\gamma^0_* W)^{\Gamma_0}\, \subset\, \gamma^0_* W \, \longrightarrow\, Z
$$
given by the $\Gamma_0$--invariant part of the direct image.
This vector bundle $(\gamma^0_* W)^{\Gamma_0}$ has a filtration of sub-sheaves given by the invariant direct images
$$
(\gamma^0_* (W\otimes {\mathcal O}_{Z_1}(-i\cdot D)))^{\Gamma_0}\, \subset\, \gamma^0_* (W\otimes {\mathcal O}_{Z_1}(-i\cdot D))
\, \longrightarrow\, Z\, , \ \ i \, \geq\, 1\, .
$$
This filtration of sub-sheaves of $(\gamma^0_* W)^{\Gamma_0}$ produces a parabolic structure on the vector bundle $(\gamma^0_* W)^{\Gamma_0}$. The
parabolic weights are rational numbers. Conversely, given a a parabolic bundle $E_*$ on $Z$ with rational parabolic weights, there is
a (possibly) ramified Galois covering $\gamma^0\, :\, Z_1\, \longrightarrow\, Z$, and a
$\text{Gal}(\gamma^0)$--equivariant vector bundle $W$ on $Z_1$, such that the parabolic vector bundle on $Z$ corresponding 
to $W$ is isomorphic to $E_*$.

It should be mentioned that the above equivalence is local in
$Z$, both in analytic and Zariski topology. In other words, $Z$ can be replaced by
an open subset of it in this equivalence of categories. We also note that outside
the parabolic points, the equivariant bundle corresponding to a parabolic bundle is
simply the pullback of the vector bundle underlying the parabolic bundle.

The complement $Z\setminus \{z_1,\, \cdots, \, z_n\}$ will be denoted by $U$.

Fix a point $y\, \in\, \{z_1,\, \cdots, \, z_n\}$, and take an
open neighborhood $y\, \in\, U_y\, \subset\, Z$ of $y$ in $Z$, in analytic
or Zariski topology. Consider an equivariant bundle $(\widetilde{U}_y,\,\gamma,\,
W^y)$ corresponding to the parabolic bundle $W_*\vert_{U_y}$. This means that
$$
\gamma\, :\, \widetilde{U}_y\, \longrightarrow\, U_y
$$
is a ramified Galois covering morphism, and $W^y$ is a $\text{Gal}(\gamma)$--equivariant
vector bundle on $\widetilde{U}_y$ that corresponds to $W_*\vert_{U_y}$. As mentioned
before, the restriction of $W^y$ to $\gamma^{-1}(U_y\bigcap U)$ is identified with
the restriction of $\gamma^*W$ to $\gamma^{-1}(U_y\bigcap U)$. We also note that
$\gamma^* \text{Ad}(W)$ is canonically identified with $\text{Ad}(\gamma^* W)$.

\begin{definition}\label{drts}
A \textit{ramified torus sub-bundle} for $W_*$ is a torus sub-bundle
$${\mathcal T}\, \subset\, \text{Ad}(W)\vert_U$$ over $U$ satisfying the following
condition: For every point $y\, \in\, \{z_1,\, \cdots, \, z_n\}$, there is an
open neighborhood $y\, \in\, U_y\, \subset\, Z$ of $y$ in $Z$, such that if
$(\widetilde{U}_y,\, \gamma,\, W^y)$ is an equivariant bundle corresponding to the
parabolic bundle $W_*\vert_{U_y}$, then the torus sub-bundle
$$
\gamma^*{\mathcal T}\, \subset\, \text{Ad}(\gamma^* W)\,=\, \text{Ad}(W^y)
$$
on $\gamma^{-1}(U_y\bigcap U)$ extends across $\gamma^{-1}(y)$ to produce a torus sub-bundle
of $\gamma^* \text{Ad}(W^y)$ over the open subset $\gamma^{-1}(U_y\bigcap
(U\bigcup \{\gamma^{-1}(y)\}))$.
\end{definition}

We have used above that $W^y$ and $\gamma^*W$ are identified over 
$\gamma^{-1}(U_y\bigcap U)$.

In the above definition it is equivalent to take the open neighborhood $U_y$ to be in
Zariski topology or in analytic topology. More precisely, if there is an analytic
open subset $U_y$ satisfying the above conditions, then there is a also Zariski open
subset $U_y$ satisfying the above conditions. Indeed, the only issue is the
order of ramification of $\gamma$ over $y$. Any given order can be achieved by
coverings over both analytic and Zariski neighborhoods of $y$.
In fact, $U_y$ can be taken to be a
ramified covering of $Z$ which works simultaneously for all points of $Z\setminus U$.
To explain this, we recall a result on coverings of surfaces.

Let $S$ be a connected compact oriented $C^\infty$ surface and
$$
\{x_1,\, \cdots,\, x_d\}\, \subset\, S
$$
a finite subset. For each $1\, \leq\, i\, \,\leq\, d$, fix an integer $n_i\, \geq\, 2$
for the point $x_i$. Assume that least
one of the following three conditions are satisfied:
\begin{enumerate}
\item{} ${\rm genus}(S)\, \geq\, 1$,

\item $d \, \notin \, \{1\, ,2\}$,

\item if $d \,=\, 2$, then $n_1\,=\,n_2$.
\end{enumerate}
So only two cases are ruled out by this assumption: $\text{genus}(S)\,=\,0\,=\, d-1$
and $\text{genus}(S)\,=\,0\, =\, d-2\,\not=\, n_1-n_2$. A theorem due to
Bundgaard--Nielsen and Fox says that there is a finite Galois covering
$$
\beta\, :\, \widetilde{S}\, \longrightarrow\, S
$$
such that $\beta$ is unramified over $S\setminus \{x_1,\, \cdots,\, x_d\}$,
and for every $1\, \leq\, i\, \,\leq\, d$, the order of ramification
at every point of $\beta^{-1}(x_i)$ is $n_i$
\cite[p. 26, Proposition 1.2.12]{Na}. (We call
the order of ramification at $0$ of the map $z\, \longmapsto\,
z^k$ to be $k$.)

In view of the above theorem, given a ramified torus sub-bundle ${\mathcal T}\,
\subset\, \text{Ad}(W)\vert_U$ for $W_*$,
there is a ramified Galois covering 
$$
\gamma\, :\, \widetilde{Z}\, \longrightarrow\, Z
$$
\'etale over $U$, and a $\text{Gal}(\gamma)$--equivariant vector bundle $\widetilde{W}$
on $\widetilde{Z}$ that corresponds to $W_*$, such that the torus sub-bundle
$$\gamma^*{\mathcal T}\, \subset\, \text{Ad}(\gamma^*W)$$ over $\gamma^{-1}(U)$
extends to entire $\widetilde{Z}$ as a torus sub-bundle of $\text{Ad}(\widetilde{W})$.

If we are in a situation of the two exceptions in the above theorem of Bundgaard--Nielsen 
and Fox, simply introduce either one or two extra parabolic points with trivial parabolic 
structure over those points (meaning trivial quasi-parabolic filtration and zero parabolic 
weight).

It should be mentioned that given a ramified torus sub-bundle
$${\mathcal T}\, \subset\, \text{Ad}(W)\vert_U$$
for $W_*$, this ${\mathcal T}$ does not, in general, extend to some torus of $\text{Ad}(W)_{z_i}$
on a parabolic point $z_i$. However $\mathcal T$ produces a parabolic subgroup
of $\text{Ad}(W)_{z_i}$.

\subsection{Ramified torus sub-bundle for direct image of a vector bundle}\label{se3.2}

As in Section \ref{se2.2}, let $Z$ be a smooth complex algebraic curve,
which may not be projective or connected.
Let $Y$ be a smooth complex algebraic curve and $\phi\, :\, Y\, \longrightarrow\, Z$
an \'etale covering; so $Y$ also need not be connected or projective. Take a
vector bundle $V$ on $Y$, and consider the vector bundle $$W\,=\, \phi_*V$$ on $Z$.
For any $x\, \in\, Z$, the fiber $W_x$ has the direct sum decomposition
\begin{equation}\label{t2}
W_x\,=\, (\phi_*V)_x\,=\, \bigoplus_{y\in \phi^{-1}(x)} V_y\, .
\end{equation}
This produces a torus sub-bundle $\mathcal T$ of $\text{Ad}(W)$.

To describe $\mathcal T$ in another way, note that the direct image $\phi_*\text{Ad}(V)$
is a subgroup scheme
$$
\phi_*\text{Ad}(V)\, \subset\, \text{Ad}(W)\, .
$$
In fact, we have $\phi_*\text{Ad}(V)\, =\, (\phi_*\text{End}(V))\bigcap \text{Ad}(W)$.
The center of $\phi_*\text{Ad}(V)$ is the torus sub-bundle
\begin{equation}\label{dT}
{\mathcal T}\, \subset\, \phi_*\text{Ad}(V)\, \subset\, \text{Ad}(W)\, .
\end{equation}
Note that
the centralizer of $\mathcal T$ in $\text{Ad}(W)$ is $\phi_*\text{Ad}(V)$. Related
constructions can be found in \cite{DG}, \cite{DoPa}.

Now assume that $Z$ is connected and projective, and allow the covering $\phi$ to be ramified,
but assume that $Y$ is also connected; note that $Y$ is projective because $Z$ is so.
Let $U\, \subset\, Z$ be the Zariski open dense subset over which $\phi$
is \'etale.

As before, $W\,=\, \phi_*V$ is a vector bundle on $Z$, because $\phi$ is a finite
morphism. As constructed in \eqref{dT}, we have a torus sub-bundle
\begin{equation}\label{t1}
{\mathcal T}\, \subset\, \text{Ad}(\phi_*V)\vert_U\,=\, \text{Ad}(W)\vert_U
\end{equation}
over $U$.

In Section \ref{se2} we saw that $W$ has a parabolic structure on the complement of 
$U$; we are taking the trivial parabolic structure on $V$. Let $W_*$ be the parabolic 
vector bundle defined by this parabolic structure with $W$ as the underlying vector 
bundle. Note that all the parabolic weights of $W_*$ are rational numbers.

\begin{proposition}\label{prop1}
The torus sub-bundle ${\mathcal T}$ in \eqref{t1} is a ramified torus sub-bundle
for $W_*$.
\end{proposition}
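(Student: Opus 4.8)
The plan is to verify the defining condition in Definition \ref{drts} directly, working locally around each parabolic point $x\in Z\setminus U$. Fix such a point $x$, with $\varphi^{-1}(x)_{\rm red}=\{y_1,\ldots,y_m\}$ and multiplicities $b_1,\ldots,b_m$. The key observation is that the parabolic structure on $W_*=\varphi_*V_*$ near $x$ (with $V_*$ carrying the trivial parabolic structure) is exactly the one that arises, via the Bundgaard--Nielsen--Fox type construction recalled in Section \ref{se3}, from a suitable ramified Galois covering: take $U_x$ a small neighborhood of $x$ and let $\gamma\colon\widetilde{U}_x\to U_x$ be a Galois covering ramified only over $x$, with ramification order at each point of $\gamma^{-1}(x)$ divisible by every $b_k$ (such a covering exists by the quoted theorem). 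Then the equivariant bundle $W^x$ on $\widetilde{U}_x$ corresponding to $W_*|_{U_x}$ can be described explicitly in terms of $V$ and the local structure of $\varphi$ near the $y_k$.

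First I would make this local picture concrete. Near each $y_k$ the map $\varphi$ looks like $t\mapsto t^{b_k}$ in suitable coordinates, so after base change by $\gamma$ the fibre product $\widetilde{U}_x\times_{U_x}Y$ becomes a disjoint union of smooth curves étale over $\widetilde{U}_x$ (the ramification of $\gamma$ being chosen to absorb that of $\varphi$); call this normalization $\widetilde{Y}$, with the induced étale map $\widetilde\phi\colon\widetilde{Y}\to\widetilde{U}_x$ and the projection $\widetilde{Y}\to Y$. The Galois group $\text{Gal}(\gamma)$ acts on $\widetilde{Y}$, and the pulled-back vector bundle $\widetilde{V}$ on $\widetilde{Y}$ is naturally equivariant. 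I would then identify $W^x$ with $\widetilde\phi_*\widetilde{V}$ as a $\text{Gal}(\gamma)$--equivariant bundle on $\widetilde{U}_x$: the compatibility of direct image with flat base change, together with the computation in Section \ref{se2} of the filtration $F^k_\ell$ and the weights \eqref{dk}, shows that the invariant direct image construction applied to $\widetilde\phi_*\widetilde{V}$ reproduces precisely the parabolic structure on $\varphi_*V$ at $x$. (This is the step where I expect to spend the most care: matching the weights $(c+\lambda^d_{y_k})/b_k$ coming from \eqref{dk} with the weights produced by the invariant-direct-image filtration by powers of the ramification divisor $\gamma^{-1}(x)$, keeping track of the ramification orders.)

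Granting that identification, the rest is essentially the étale case already handled in Section \ref{se3.2}. Over $\gamma^{-1}(U_x\cap U)$ the bundle $W^x$ is identified with $\gamma^*W$, and under the identification $W^x\cong\widetilde\phi_*\widetilde{V}$ the étale covering $\widetilde\phi$ is defined over the whole of $\widetilde{U}_x$ (not just over $\gamma^{-1}(U_x\cap U)$), because by construction $\widetilde\phi$ is étale everywhere. By the construction in \eqref{dT} applied to the étale map $\widetilde\phi$, the center of $\widetilde\phi_*\text{Ad}(\widetilde{V})$ is a torus sub-bundle of $\text{Ad}(\widetilde\phi_*\widetilde{V})=\text{Ad}(W^x)$ over all of $\widetilde{U}_x$. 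I would then check that the restriction of this torus sub-bundle to $\gamma^{-1}(U_x\cap U)$ agrees with $\gamma^*\mathcal{T}$: both are the center of the direct-image subgroup scheme $\phi_*\text{Ad}(V)$ pulled back, and the formation of this center commutes with the étale base change $\gamma^{-1}(U_x\cap U)\to U_x\cap U$ since tori and their centralizers are defined fibrewise and the isotypical decompositions \eqref{t2} pull back compatibly. Hence $\gamma^*\mathcal{T}$ extends across $\gamma^{-1}(x)$ to a torus sub-bundle of $\text{Ad}(W^x)$, which is exactly the condition required by Definition \ref{drts}. Finally, since the Bundgaard--Nielsen--Fox theorem lets us choose a single $\gamma$ working simultaneously at all points of $Z\setminus U$, the local verifications patch together and $\mathcal{T}$ is a ramified torus sub-bundle for $W_*$.
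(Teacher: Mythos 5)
Your argument is correct in outline, but it follows a genuinely different route from the paper's proof of this proposition: the paper does not localize. It takes the Galois closure $\widetilde{Y}\to Y\to Z$ of the given covering $\phi$ itself and writes down the $G$--equivariant bundle corresponding to $W_*$ explicitly as the induced bundle $\widetilde{V}=\bigoplus_{\sigma\in C}(\sigma^{-1})^*\widetilde{\phi}^*V$; the tautological direct-sum decomposition of this bundle is defined over all of $\widetilde{Y}$ and visibly restricts, over the \'etale locus, to the pullback of the fibrewise decomposition defining $\mathcal T$, so the extension required by Definition \ref{drts} is immediate. Your proof instead pulls back by an auxiliary (local) covering $\gamma$ of the base chosen to absorb the ramification orders $b_k$, normalizes the fibre product to get an \'etale map $\widetilde\phi$, identifies the equivariant model of $W_*$ with $\widetilde\phi_*\widetilde V$, and then quotes the \'etale construction of Section \ref{se3.2}; this is in substance the argument the paper reserves for the more general Proposition \ref{prop2}, specialized to trivial parabolic structure on $V$. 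The trade-off: the paper's induced-bundle description makes the extended torus completely explicit with one global covering, exploiting the triviality of the parabolic structure on $V$, whereas your route generalizes verbatim to nontrivial $V_*$ but hinges on the identification of the equivariant bundle for $\varphi_*V_*$ with $\widetilde\phi_*\widetilde V$, i.e.\ on matching the weights \eqref{dk} with those produced by the invariant-direct-image filtration --- you correctly flag this as the delicate step, and it is carried out at the same level of detail ("straight-forward to deduce") in the paper itself, so this is not a gap. Two minor points: locally a cyclic covering $s\mapsto s^{N}$ with $N$ a common multiple of the $b_k$ suffices, so invoking Bundgaard--Nielsen--Fox for the local neighborhoods is overkill (and its genus-zero exceptional cases would anyway be sidestepped as the paper indicates); and you should shrink $U_x$ so that it meets the parabolic divisor only in $x$, so that your extension over all of $\widetilde U_x$ is exactly what Definition \ref{drts} asks for.
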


\begin{proof}
There is a ramified covering $\widetilde{\phi}\, :\, \widetilde{Y}\, \longrightarrow\, 
Y$ such that the composition $$\phi\circ\widetilde{\phi}\, :\, \widetilde{Y}\, 
\longrightarrow\, Z$$ is Galois. Indeed, this follows immediately from the fact that 
any finite index subgroup of a finitely presented group $G$ contains a normal subgroup 
of $G$ of finite index. Note that this implies that the covering $\widetilde{\phi}$ is 
Galois. Let $G$ (respectively, $H$) denote the Galois group of 
$\phi\circ\widetilde{\phi}$ (respectively, $\widetilde{\phi}$). The quotient map $G\, 
\longrightarrow\, G/H$ will be denoted by $q$. Fix a finite subset $C$ of $G$ such that 
the composition of maps
$$
C\, \hookrightarrow\, G \, \stackrel{q}{\longrightarrow}\, G/H
$$
is a bijection. The left--translation action of $G$ on $G/H$ produces an action of $G$
on $C$ using the above bijection.

Consider the vector bundle
\begin{equation}\label{tv}
\widetilde{V}\, :=\, 
\bigoplus_{\sigma\in C} (\sigma^{-1})^*\widetilde{\phi}^* V\, \longrightarrow\, \widetilde{Y}\, .
\end{equation}
Using the above action of $G$ on $C$, the vector bundle $\widetilde{V}$ in \eqref{tv} has the 
structure of a $G$--equivariant vector bundle on $\widetilde Y$. The parabolic vector bundle on 
$\widetilde{Y}/G\,=\, Z$ corresponding to this $G$--equivariant vector bundle $\widetilde V$ is 
in fact $W_*$. (This is straight-forward to deduce from the correspondence between parabolic 
bundles and equivariant bundles.) In particular, we have
\begin{equation}\label{t3}
((\phi\circ\widetilde{\phi})^* W)\vert_{(\phi\circ\widetilde{\phi})^{-1}(U)}\,=\, 
\widetilde{V}\vert_{(\phi\circ\widetilde{\phi})^{-1}(U)}\, .
\end{equation}

Over $U\, \subset\, Z$, consider the decomposition of the fibers of $W\vert_U\,=\,
(\phi_*V)\vert_U$ given be the torus sub-bundle $\mathcal T$ in \eqref{t1}. We recall that
for any $x\, \in\, U$, this decomposition is the natural direct sum decomposition
of $W_x$ in \eqref{t2}. Using the isomorphism in
\eqref{t3}, the pullback of this decomposition of the fibers of
$W\vert_U$ produces a decomposition of the fibers of $\widetilde{V}$ over
$(\phi\circ\widetilde{\phi})^{-1}(U)$. It is straight-forward to check that this
decomposition of the fibers of $\widetilde{V}\vert_{(\phi\circ\widetilde{\phi})^{-1}(U)}$
coincides with the decomposition of $\widetilde{V}\vert_{(\phi\circ\widetilde{\phi})^{-1}(U)}$
in \eqref{tv}. But the decomposition of $\widetilde{V}$ in \eqref{tv} is over entire
$\widetilde Y$. This immediately implies that the torus sub-bundle
$$
(\phi\circ\widetilde{\phi})^*{\mathcal T} \,\subset\,
(\phi\circ\widetilde{\phi})^*(\text{Ad}(W)\vert_U)\,=\,
\text{Ad}(\widetilde{V})\vert_{(\phi\circ\widetilde{\phi})^{-1}(U)}
$$
(see \eqref{t3} and \eqref{t1}) extends to a torus sub-bundle of
$\text{Ad}(\widetilde{V})$ over entire $\widetilde Y$.
Hence ${\mathcal T}$ is a ramified torus sub-bundle for $W_*$ (see Definition
\ref{drts}).
\end{proof}

\subsection{Ramified torus sub-bundle for direct image of a parabolic bundle}\label{se3.3}

As before, $Y$ and $Z$ are smooth connected projective complex curves, and $\phi$
is a finite morphism from $Y$ to $Z$. Let $V_*$ be a parabolic vector bundle over $Y$
with parabolic divisor $\{y_1,\, \cdots, \, y_n\}\, \subset\, Y$;
for each $1\, \leq\, i\, \leq\, n$, the quasi-parabolic filtration on $V_{y_i}$ is
$$
V_{y_i}\,=\, F^i_1 \, \supsetneq\, F^i_2\, \supsetneq\, \cdots \, \supsetneq\,
F^i_{\ell_i}\, \supsetneq\, F^i_{\ell_i+1}\,=\, 0\, ,
$$
while the corresponding parabolic weights are
$$
0\, \leq \, \lambda^i_1\, <\, \lambda^i_2\, <\, \cdots\, < \,
\lambda^i_{\ell_i-1}\, < \, \lambda^i_{\ell_i} \, <\, \lambda^i_{\ell_i+1}\,=\, 1\, .
$$
We assume that all the parabolic weights $\lambda^i_j$ are rational numbers.

Let $$W_*\,:=\, \phi_* V_*$$ be the parabolic vector bundle over $Z$ (constructed as done
in Section \ref{se2}). Let
\begin{equation}\label{du}
U\, \subset\, Z\setminus \phi(\{y_1,\, \cdots, \, y_n\})\, \subset\, Z
\end{equation}
be the Zariski open dense subset over which the restriction of $\phi$
to $\phi^{-1}(Z\setminus \phi(\{y_1, \cdots, y_n\}))$ is \'etale. We have a torus
sub-bundle
\begin{equation}\label{t11}
{\mathcal T}\, \subset\, \text{Ad}(\phi_*V)\vert_U\,=\, \text{Ad}(W)\vert_U\, ,
\end{equation}
where $V$ and $W$ are the vector bundle underlying $V_*$ and $W_*$ respectively.

The following is a generalization of Proposition \ref{prop1}.

\begin{proposition}\label{prop2}
The torus sub-bundle ${\mathcal T}$ in \eqref{t11} is a ramified torus sub-bundle
for the parabolic bundle $W_*\,:=\, \phi_* V_*$.
\end{proposition}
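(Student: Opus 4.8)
The strategy is to reduce Proposition~\ref{prop2} to Proposition~\ref{prop1} by the standard device of passing to a ramified Galois cover on which the parabolic structure of $V_*$ becomes an honest $\Gamma$-equivariant bundle. Since all parabolic weights of $V_*$ are rational, there is a ramified Galois covering $\psi\,:\,\widetilde Y\,\longrightarrow\, Y$ with Galois group $\Gamma$, \'etale over $Y\setminus\{y_1,\ldots,y_n\}$, and a $\Gamma$-equivariant vector bundle $\widetilde V$ on $\widetilde Y$ whose associated parabolic bundle is $V_*$. After possibly enlarging the cover (using the Bundgaard--Nielsen--Fox theorem recalled above, so that the cover works simultaneously over all branch points of $\phi$ as well), we may assume moreover that the composite $\phi\circ\psi$ factors through a Galois cover of $Z$; equivalently, there is a ramified Galois cover $\gamma\,:\,\widetilde Z\,\longrightarrow\, Z$ with group $G$ together with a finite $\Gamma$-invariant\,/\,$G$-compatible map $\widetilde Y\,\longrightarrow\,\widetilde Z$. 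On this level the parabolic direct image $W_*\,=\,\phi_*V_*$ corresponds to a $G$-equivariant bundle $\widetilde W$ on $\widetilde Z$, and over the \'etale locus $\widetilde W$ is (the pullback of) an \'etale direct image of a vector bundle, exactly the situation of Section~\ref{se3.2}.

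First I would make the correspondence between $W_*=\phi_*V_*$ and the $G$-equivariant bundle $\widetilde W$ on $\widetilde Z$ fully explicit, in the spirit of equation~\eqref{tv} in the proof of Proposition~\ref{prop1}: writing $\widetilde Y'\,:=\,\widetilde Y\times_Z\widetilde Z$ (or a suitable connected component / normalization thereof), the bundle $\widetilde V$ pulls back to $\widetilde Y'$, and then $\widetilde W$ is obtained as a direct image of $\bigoplus_{\sigma}(\sigma^{-1})^*(\text{pullback of }\widetilde V)$ along the residual \'etale map $\widetilde Y'\,\longrightarrow\,\widetilde Z$, with the $G$-structure coming from permuting the summands. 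The key point to verify is that the parabolic bundle on $Z$ associated to this $G$-equivariant $\widetilde W$ is indeed $\phi_*V_*$ as constructed in Section~\ref{se2}; this is a local computation at each branch point of $\phi$, matching the weighted filtration~\eqref{dk} of $W_k$ against the invariant-direct-image description of parabolic structures, and it is essentially bookkeeping once one keeps track of the ramification multiplicities $b_k$.

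Next, over the \'etale locus $\gamma^{-1}(U)\,\subset\,\widetilde Z$, the bundle $\widetilde W$ restricted there is an \'etale direct image of a vector bundle, so Proposition~\ref{prop1} (or rather its proof) applies: the torus sub-bundle $\gamma^*\mathcal T\,\subset\,\text{Ad}(\widetilde W)\vert_{\gamma^{-1}(U)}$ is exactly the center of the direct-image subgroup scheme, and its associated decomposition of fibers coincides with the ``geometric'' decomposition of $\widetilde W$ coming from the summands indexed by the points of the fiber of $\widetilde Y'\,\longrightarrow\,\widetilde Z$. Since that decomposition of $\widetilde W$ is defined over all of $\widetilde Z$, the torus sub-bundle $\gamma^*\mathcal T$ extends across $\gamma^{-1}(Z\setminus U)$ to a torus sub-bundle of $\text{Ad}(\widetilde W)$ over all of $\widetilde Z$. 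By Definition~\ref{drts}, and because $\widetilde Z\,\longrightarrow\, Z$ can be arranged to furnish the required local covers $(\widetilde U_y,\gamma,W^y)$ simultaneously at every $y\in Z\setminus U$, this shows that $\mathcal T$ is a ramified torus sub-bundle for $W_*$.

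**Main obstacle.** The routine part is the extension argument at the end; the delicate part is the identification in the middle paragraph — namely, checking that the $G$-equivariant bundle $\widetilde W$ I build really corresponds, under the parabolic\,/\,equivariant dictionary, to the parabolic direct image $\phi_*V_*$ with the precise weights~\eqref{dk} coming from the combination of the ramification of $\phi$ and the original weights of $V_*$. One must be careful that the two sources of parabolic weights at a point $x\in\phi(\{y_i\})$ — the contribution $(c)/b_k$ from the ramification order $b_k$ and the contribution $\lambda^d_{y_k}/b_k$ from the parabolic weight of $V_*$ at $y_k$ — are correctly reproduced by the invariant-direct-image filtration on the cover, and that non-parabolic points $y_k$ lying over $x$ (equipped with the trivial filtration, as arranged in Section~\ref{se2}) are handled uniformly. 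Once this local matching is in place, the global conclusion follows formally from Proposition~\ref{prop1} applied on $\gamma^{-1}(U)$.
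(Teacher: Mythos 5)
Your proposal is correct and follows essentially the same route as the paper: pass to a ramified Galois cover of $Z$ adapted to the rational weights (so that the induced map from the cover of $Y$ becomes \'etale over it), use the canonical torus sub-bundle of the \'etale direct image as in Section~\ref{se3.2}, and conclude that the pullback of $\mathcal T$ extends over the preimage of $Z\setminus U$, as required by Definition~\ref{drts}. The only difference is one of direction --- you build the equivariant model of $W_*$ as an \'etale direct image of the equivariant bundle for $V_*$ and then match it with $\phi_*V_*$, whereas the paper starts from an equivariant bundle $\widehat W$ corresponding to $W_*$ and identifies it with ${\widetilde\phi}_*\widetilde V$ --- and both arguments hinge on exactly the compatibility of the direct image with the parabolic/equivariant dictionary that you flag as the delicate step (and which the paper itself treats only briefly).
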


\begin{proof}
Let $\psi\, : \, \widehat{Z} \, \longrightarrow\, Z$ be a ramified Galois 
covering and $\widehat{W}$ a $\text{Gal}(\psi)$--equivariant vector bundle 
on $\widehat Z$ that corresponds to the parabolic vector bundle $W_*$; such a pair $(\psi,\, 
\widehat{W})$ exists because all the parabolic weights of $W_*$ are rational numbers,
which is in fact ensured by the assumption that all the parabolic weights of $V_*$ are rational
numbers. The Galois group $\text{Gal}(\psi)$ for $\psi$ will be denoted by $G$.

Let $\widetilde{Y}$ be the normalization of the fiber product $\widehat{Z}\times_Z Y$
over $Z$. Let
$$
{\widetilde\phi}\, :\, \widetilde{Y}\, \longrightarrow\, \widehat{Z}\ \
\text{ and }\ \ {\widetilde\psi}\, :\, \widetilde{Y}\, \longrightarrow\, Y
$$
be the natural projections. The above morphism ${\widetilde\psi}$ is a ramified
Galois covering with Galois group $G$. On the other hand, the morphism
${\widetilde\phi}$ is \'etale.

There is a $G$--equivariant vector bundle $\widetilde V$ on $\widetilde Y$ that
corresponds to the parabolic vector bundle $V_*$ on $Y\,=\, \widetilde{Y}/G$
(the Galois group for ${\widetilde\psi}$ is $G$). The action of $G$ on $\widetilde V$
produces an action of $G$ on the direct image ${\widetilde\phi}_*\widetilde{V}\,
\longrightarrow\, \widehat{Z}$. This action of $G$ on ${\widetilde\phi}_*\widetilde{V}$
commutes with the action of $G\,=\, \text{Gal}(\psi)$ on $\widehat Z$.

Consider the open subset $U\, \subset\, Z$ in \eqref{du}. It is straight-forward to
check that the restriction of ${\widetilde\phi}_*\widetilde{V}$ to $\psi^{-1}(U)$
is identified with the pullback $\psi^*(W_*\vert_U)\, \longrightarrow\, \psi^{-1}(U)$;
also this identification is $G$--equivariant for the natural action of $G\,=\,
\text{Gal}(\psi)$ on $\psi^*(W_*\vert_U)$.
On the other hand, the vector bundle $\widehat{W}\vert_{\psi^{-1}(U)}$ is
identified with $\psi^*(W_*\vert_U)\, \longrightarrow\, \psi^{-1}(U)$, because the
parabolic vector bundle $W_*$ corresponds to the $G$--equivariant vector bundle
$\widehat W$ on $\widehat Z$; this identification is also $G$--equivariant.

The resulting isomorphism between the two $G$--equivariant vector bundles 
$\widehat{W}\vert_{\psi^{-1}(U)}$ and 
$({\widetilde\phi}_*\widetilde{V})\vert_{\psi^{-1}(U)}$ over $\psi^{-1}(U)$ extends to 
an isomorphism of $G$--equivariant vector bundles between $\widehat W$ and 
${\widetilde\phi}_*\widetilde{V}$ over $\widehat Z$. In fact, both the $G$--equivariant 
vector bundles $\widehat W$ and ${\widetilde\phi}_*\widetilde{V}$ correspond to the 
parabolic vector bundle $W_*$ on $\widehat{Z}/G \,=\, Z$.

Since the morphism ${\widetilde\phi}$ is \'etale, for the direct image
${\widetilde\phi}_*\widetilde{V}$ there is a torus sub-bundle
$$
\widehat{\mathcal T}\, \subset\, \text{Ad}({\widetilde\phi}_*\widetilde{V})
$$
over $\widehat{Z}$.

Consider ${\mathcal T}$ over $U$ in \eqref{t11}. The above isomorphism between 
$\widehat W$ and ${\widetilde\phi}_*\widetilde{V}$ takes $\psi^*{\mathcal T}$ over 
$\psi^{-1}(U)$ to $\widehat{\mathcal T}\vert_{\psi^{-1}(U)}$. Consequently, 
$\psi^*{\mathcal T}$ extends to a torus sub-bundle of $\text{Ad}(\widehat{W})$ over 
entire $\widehat Z$.
\end{proof}

\section{Direct image, connection and ramified torus sub-bundle}\label{se4}

\subsection{Direct image of connection on parabolic bundle}

Let $\varphi\, :\, Y\, \longrightarrow\, X$ is a finite morphism between smooth complex 
projective curves, with $X$ connected. Let $V_*$ be a parabolic vector bundle on $Y$. Consider 
the parabolic vector bundle $\varphi_*V_*$ on $X$ constructed in Section \ref{se2}. We will show 
that a connection on the parabolic vector bundle $V_*$ induces a connection on $\varphi_*V_*$. 
We begin with a general observation.

\begin{lemma}\label{lem1}
Let $W$ be a vector bundle over $Y$ equipped with a logarithmic connection $D$ with
singular divisor $S_0$. Then $D$ induces a logarithmic connection on the vector
bundle $\varphi_*W\, \longrightarrow\, X$ whose
singular divisor is $\varphi(S_0\bigcup R)$, where $R\, \subset\, Y$ as in
\eqref{dr} is the ramification divisor for $\varphi$.
\end{lemma}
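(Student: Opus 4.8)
The plan is to produce the connection on $\varphi_*W$ directly, by pushing forward the operator $D$ and checking it has the defining derivation property, with the pole order being controlled by the ramification. Since $\varphi$ is finite, $\varphi_*$ is exact on coherent sheaves, and for any locally free sheaf $\mathcal F$ on $Y$ we have $\varphi_*(\mathcal F\otimes \varphi^*\mathcal G)\,=\, (\varphi_*\mathcal F)\otimes\mathcal G$ by the projection formula. First I would recall the standard exact sequence $0\,\longrightarrow\, \varphi^*K_X\,\longrightarrow\, K_Y\,\longrightarrow\, K_Y\vert_R\,\longrightarrow\, 0$ relating the cotangent bundles, where the first map is the codifferential of $\varphi$; tensoring with appropriate line bundles of points this gives an inclusion $K_Y(S_0)\,\hookrightarrow\, \varphi^*K_X(\varphi(S_0\cup R))$, because a $1$-form on $Y$ with a simple pole along $y$, when written via a local coordinate $z$ on $X$ with $\varphi$ ramified of order $b$ at $y$, becomes $\varphi^*$ of a form on $X$ with a simple pole at $\varphi(y)$ (the ramification contributes exactly the needed extra pole order $b-1$ so that $d\beta/\beta$ with $\beta^b\sim z$ maps to $(1/b)\,dz/z$). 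This is the one genuinely non-formal point, and I expect it to be the main obstacle: one must check, by the local computation just indicated, that combining a pole of $W$-sections along $S_0$ with the ramification along $R$ produces at worst a logarithmic pole along $\varphi(S_0\cup R)$ downstairs, and no worse.

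Granting that inclusion, the construction is: compose
\[
D\,:\, W\,\longrightarrow\, W\otimes K_Y(S_0)\,\hookrightarrow\, W\otimes \varphi^*K_X(\varphi(S_0\cup R))\, ,
\]
apply $\varphi_*$, and use the projection formula to get
\[
\varphi_*D\,:\, \varphi_*W\,\longrightarrow\, (\varphi_*W)\otimes K_X(\varphi(S_0\cup R))\, .
\]
Then I would verify the Leibniz identity $(\varphi_*D)(fs)\,=\, f\,(\varphi_*D)(s)+s\otimes df$ for a locally defined function $f$ on $X$ and a local section $s$ of $\varphi_*W$ (equivalently a local section of $W$ on the preimage). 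Here one uses that $D((\varphi^*f)\cdot s)\,=\, (\varphi^*f)\cdot D(s)+s\otimes d(\varphi^*f)$ and that $d(\varphi^*f)\,=\, \varphi^*(df)$ under the codifferential, which is compatible with the identifications above; since the $f$-part of the computation happens away from the pushforward's fibrewise mixing, the Leibniz rule downstairs follows from the Leibniz rule upstairs. That $\varphi_*D$ is a differential operator of order one singular exactly along $\varphi(S_0\cup R)$ — not along a smaller divisor — follows because $D$ is genuinely singular along $S_0$ and the map $\varphi$ is genuinely ramified along $R$; over $X\setminus \varphi(S_0\cup R)$ the map $\varphi$ is étale and $D$ is regular, so $\varphi_*D$ is an honest (non-logarithmic) connection there, as pushforward along an étale map of a connection is a connection.

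A convenient alternative, which I would mention, is to do everything locally analytically: over a small disc $\Delta\subset X$ around a point of $\varphi(S_0\cup R)$ with coordinate $z$, the preimage is a disjoint union of discs $\Delta_k$ with coordinates $w_k$, $w_k^{b_k}\,=\, z$, and $\varphi_*W$ restricted to $\Delta$ is $\bigoplus_k (\pi_k)_*(W\vert_{\Delta_k})$; on each summand one writes $D$ in the coordinate $w_k$, re-expresses $dw_k\,=\, (1/b_k)\,w_k^{1-b_k}\,dz\,=\, (z/b_k)\,(dz/(z w_k^{\,b_k-1}))$ hence $dw_k/w_k\,=\, (1/b_k)\,dz/z$, and reads off that the resulting operator on the $b_k$-dimensional (over $\mathcal O_\Delta$) pushforward has a logarithmic pole at $z\,=\,0$; glue over the finitely many special points. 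This makes the pole-order bookkeeping transparent and is the form I would actually write out. Either way, the residue of the induced connection at a point $x\,=\,\varphi(y_k)$ will act on the block $W_k$ and, in suitable bases, is conjugate to $\tfrac1{b_k}\bigl(\mathrm{Res}(D,y_k)+\mathrm{diag}(0,1,\dots,b_k-1)\bigr)$ — a remark worth recording since it is what will feed into the later compatibility with parabolic weights, though it is not needed for the statement of this lemma.
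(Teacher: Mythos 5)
Your proposal is correct and follows essentially the same route as the paper: identify $\varphi^*\bigl(K_X\otimes{\mathcal O}_X(\varphi(S_0\cup R))\bigr)$ with $K_Y\otimes{\mathcal O}_Y(S_Y)$ for $S_Y$ the reduced preimage, compose $D$ with the inclusion $W\otimes K_Y(S_0)\hookrightarrow W\otimes K_Y(S_Y)$, push forward using the projection formula, and check the Leibniz identity. The local $w^{b_k}=z$ computation and the residue remark you add are consistent with (and anticipate) what the paper uses later in the proof of Theorem \ref{thm1}.
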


\begin{proof}
For notational convenience, the reduced divisor $\varphi(S_0\bigcup R)_{\rm red}$ will be
denoted by $S_X$. The reduced divisor $\varphi^{-1}(S_X)_{\rm red}$ will be denoted by
$S_Y$. It is straight-forward to check that
$$
\varphi^*(K_X\otimes {\mathcal O}_X(S_X))\,=\, K_Y\otimes {\mathcal O}_Y(S_Y)\, .
$$
Therefore, by the projection formula, we have
\begin{equation}\label{f6}
\varphi_*(W\otimes K_Y\otimes {\mathcal O}_Y(S_Y))\,=\,
(\varphi_*W)\otimes K_X\otimes {\mathcal O}_X(S_X)\, .
\end{equation}

Consider the composition homomorphism
$$
W\, \stackrel{D}{\longrightarrow}\, W\otimes K_Y\otimes {\mathcal O}_Y(S_0)\,
\hookrightarrow\, W\otimes K_Y\otimes {\mathcal O}_Y(S_Y)\, ;
$$
note that $S_0\, \subset \, S_Y$. Now taking direct image of this composition, and
using \eqref{f6}, we have
$$
\varphi_*W\, \longrightarrow\, \varphi_*(W\otimes K_Y\otimes {\mathcal O}_Y(S_Y))
\,=\, (\varphi_*W)\otimes K_X\otimes {\mathcal O}_X(S_X)\, ,
$$
which will be denoted by $\varphi_*D$.
Since $D$ satisfies the Leibniz identity, it follows that $\varphi_*D$ also
satisfies the Leibniz identity. Hence $\varphi_*D$ is a logarithmic connection on
$\varphi_*D$ singular over $S_X$.
\end{proof}

\begin{theorem}\label{thm1}
Let $V_*$ be a parabolic vector bundle over $Y$ equipped with a connection $D$.
Then the logarithmic connection $\varphi_*D$ on $\varphi_* V$ is a connection
on the parabolic vector bundle $\varphi_*V_*$, where $V$ is the vector bundle
underlying $V_*$.
\end{theorem}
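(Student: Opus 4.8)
The plan is to verify Definition \ref{dlc} for the logarithmic connection $\varphi_*D$ on $\varphi_*V$, working one parabolic point $x\in\varphi(R\cup P)$ at a time, and checking that the residue $\mathrm{Res}(\varphi_*D,x)$ preserves each step of the filtration of $(\varphi_*V)_x$ constructed in Section \ref{se2} and acts on each graded piece by the assigned parabolic weight. By Lemma \ref{lem1} we already know $\varphi_*D$ is a logarithmic connection singular on $\varphi(S_0\cup R)$ where $S_0=P$ is the parabolic divisor of $V_*$, so $\varphi(S_0\cup R)=\varphi(R\cup P)$, which matches the parabolic divisor of $\varphi_*V_*$. It remains to compute the residue.

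First I would treat the easy case $x\in\varphi(R\cup P)\setminus\varphi(R)$, where $\varphi$ is unramified over $x$ and $(\varphi_*V)_x=\bigoplus_{y\in\varphi^{-1}(x)}V_y$. Here the direct image connection decomposes as a direct sum (locally near $x$, $\varphi$ is a disjoint union of isomorphisms), so $\mathrm{Res}(\varphi_*D,x)=\bigoplus_{y}\mathrm{Res}(D,y)$, and the conditions of Definition \ref{dlc} for $\varphi_*V_*$ follow termwise from those for $V_*$. The substantive case is $x\in\varphi(R)$. Fix $y_k\in\varphi^{-1}(x)_{\mathrm{red}}$ with multiplicity $b_k$, pick the local coordinate $\beta$ on $Y$ near $y_k$ with $\beta(y_k)=0$, $d\beta(y_k)\neq 0$ as in the construction of $\nu(y_k,\ell)$, and let $z$ be a local coordinate on $X$ near $x$; then $z\circ\varphi = \beta^{b_k}\cdot(\text{unit})$, so $\varphi^*\!\left(\frac{dz}{z}\right) = b_k\frac{d\beta}{\beta} + (\text{holomorphic})$. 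The key local computation is: for a section $s_v$ of $V$ near $y_k$ with $s_v(y_k)=v$, the element $f'_{\beta,s_v}$ represented by $\beta^{\ell}s_v$ satisfies
\[
\varphi_*D(f'_{\beta,s_v}) \;\equiv\; (\ell + \text{eigenvalue of }\mathrm{Res}(D,y_k)\text{ on }v)\cdot f'_{\beta,s_v}\otimes\frac{dz}{z} \pmod{F^k_{\ell+1}\otimes K_X(S_X)}\,,
\]
which one gets by writing $D(\beta^\ell s_v) = \ell\beta^{\ell-1}s_v\,d\beta + \beta^\ell D(s_v)$, converting $\ell\beta^{\ell-1}d\beta = \frac{\ell}{b_k}\beta^\ell\cdot\varphi^*(\frac{dz}{z}) + \text{(higher order)}$, and using that $D(s_v)$ contributes $\mathrm{Res}(D,y_k)(v)$ to leading order via $\beta^\ell D(s_v) \equiv \beta^\ell\cdot\frac{1}{b_k}(\mathrm{Res}(D,y_k)(v))\otimes\varphi^*(\frac{dz}{z})$ modulo terms that raise the vanishing order. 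Combining, the residue of $\varphi_*D$ at $x$ acts on the class $f_{\beta,s_v}\in F^k_\ell/F^k_{\ell+1}\cong V_{y_k}\otimes(K_Y^{\otimes\ell})_{y_k}$ as $\frac{1}{b_k}(\ell\cdot\mathrm{Id} + \mathrm{Res}(D,y_k))$ on the $V_{y_k}$ factor.

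Granting that formula, the rest is bookkeeping against the weighted filtration $W_k = S_1\supsetneq\cdots\supsetneq S_{l(y_k)b_k}\supsetneq 0$. Since $\mathrm{Res}(D,y_k)$ preserves the quasi-parabolic filtration $\{V^d_{y_k}\}$ of $V_{y_k}$ (Definition \ref{dlc} for $V_*$), the operator $\frac{1}{b_k}(\ell\cdot\mathrm{Id}+\mathrm{Res}(D,y_k))$ preserves the subspaces $V^d_{y_k}\otimes K_Y^{\otimes\ell}$, hence $\mathrm{Res}(\varphi_*D,x)$ preserves every $S_i$; and on the subquotient corresponding to the pair $(c,d)$ — namely the graded piece with $\ell=c$ and $V^d_{y_k}/V^{d+1}_{y_k}$ factor — it acts by $\frac{1}{b_k}(c + \lambda^d_{y_k})$, which is exactly the weight \eqref{dk} assigned to $S_i$. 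Finally, summing over $k$ via the decomposition \eqref{g1} gives the full claim for $(\varphi_*V)_x$, and ranging over all $x\in\varphi(R)$ and the earlier unramified points completes the verification of both conditions of Definition \ref{dlc}. I expect the residue computation sketched above — in particular keeping careful track of which error terms land in $F^k_{\ell+1}$ versus merely in $F^k_\ell$, and pinning down the normalization factor $1/b_k$ — to be the main obstacle; everything after that is routine.
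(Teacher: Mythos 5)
Your proposal is correct and follows essentially the same route as the paper: both reduce to a local residue computation at each point of $\varphi^{-1}(x)$ and verify the two conditions of Definition \ref{dlc} against the weighted filtration of $W_k$ built from \eqref{f4} and \eqref{f5}. The only difference is cosmetic --- the paper reduces to the rank-one case and quotes how residues transform under twisting by ${\mathcal O}(n\cdot y)$ and under pullback along a totally ramified cover of degree $b_k$, whereas you carry out the equivalent computation directly in arbitrary rank, obtaining $\mathrm{Res}(\varphi_*D,x)=\tfrac{1}{b_k}\bigl(\ell\cdot\mathrm{Id}+\mathrm{Res}(D,y_k)\bigr)$ on $F^k_\ell/F^k_{\ell+1}$.
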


\begin{proof}
We need to show that the residues of $\varphi_*D$ have the required properties
(see Definition \ref{dlc}). Since it
is a purely local question, we may restrict $V_*$ to a neighborhood of a point
of $S_0\bigcup R$; note that the parabolic structure on $\varphi_*V_*$ over 
a point $x\, \in\, S_X$ was constructed using the decomposition in \eqref{g1},
so we can treat the points of $\varphi^{-1}(x)$ separately. Therefore,
we may assume that $\varphi^{-1}(x)$ has only one point. Also, locally, around a parabolic point,
a parabolic bundle can be expressed as a direct sum of parabolic line bundles.

Recall that the quasi-parabolic filtration of $\varphi_*V_*$ at $x$ is constructed by combining 
the filtrations of $F^k_\ell/F^k_{\ell+1}$, obtained from the quasi-parabolic filtration in 
\eqref{f5}, with the filtration in \eqref{f4}. Since $D$ is a connection on the parabolic bundle 
$V_*$, its residue at $y_k\,\in\, \varphi^{-1}(x)$ preserves the filtration in \eqref{f5}.

In view of the above observations, we may assume that $V$ is a line bundle, and $x$ is a point 
$S_X$ such that $\varphi^{-1}(x)$ is a point; the Riemann surfaces $X$ and $Y$ are not compact 
anymore because we do not need this assumption for local computations.

We need to show that the residue of the connection $\varphi_*D$ at the point $x$ preserves the 
quasi-parabolic filtration of $(\varphi_*V)_x$, and on each successive quotient of this 
filtration the residue acts as multiplication by the corresponding parabolic weight.

Let $Z$ be a Riemann surface and $y\, \in\, Z$ a point. Let $L$ be a holomorphic line bundle
on $Z$, and let $D_L$ be a logarithmic connection on $L$ singular over $y$ such that the
residue of $D_L$ at $y$ is $\tau\, \in\, \mathbb C$. For any integer $n$, consider the
holomorphic line bundle $L_n\, :=\, L\otimes {\mathcal O}_Z(n\cdot y)$. So $L$ and
$L_n$ are canonically identified over the complement $Z\setminus\{y\}$. The
connection $D_L$ on $L_n$ over $Z\setminus\{y\}$ extends to a logarithmic connection
on $L_n$. Indeed, for a holomorphic section $s$ of $L$ defined on an open neighborhood $U_y$ of $y$,
and a holomorphic function $f$, defined on $U_y$, such that $f(y)\,=\, 0$
and $df(y)\,\not=\, 0$, we have
$$
D_L(\frac{1}{f^n}s) \,=\, \frac{1}{f^n}\cdot D_L(s) -\frac{n\cdot df}{f} \frac{1}{f^n} s\, .
$$
This shows that the connection $D_L$ on $L_n$ is logarithmic. Moreover, the residue, at $y$,
of this logarithmic connection on $L_n$ is $\tau-n$.

Let $\psi\, :\, Z'\, \longrightarrow\, Z$ be a ramified covering of degree $\delta$ which is totally
ramified over $y$. Denote the point $\psi^{-1}(y)$ by $y'$. Then $\psi^*D_L$ is
a logarithmic connection on $\psi^*L$ singular over $y'$. The residue of $\psi^*D_L$ at
$y'$ is $\delta\cdot\tau$.

Using the above observations the theorem follows for the case of rank one. Note that
$c$ in \eqref{dk} arises because of the tensor product with the line bundle
${\mathcal O}_Y(-cy_k)$, and $b_k$ occurs in \eqref{dk} because the degree of ramification
of $\varphi$ at $y_k$ is $b_k$.

As noted before, it suffices to prove the rank one case. 
\end{proof}

\subsection{Ramified torus sub-bundle and connection}

Let $E_*$ be a parabolic vector bundle on $X$ equipped with a connection $D$.
The parabolic divisor for $E_*$ is $S\,\subset\, X$; the complement $X\setminus S$
is denoted by $U$. The logarithmic
connection $D$ on the underlying vector bundle $E$ induces a logarithmic connection
on the endomorphism bundle $\text{ad}(E)\,=\, \text{End}(E)$. Recall that the
connection $D$ is regular on $U$.

The above regular connection on $\text{ad}(E)\vert_U$ produces a connection on the
sub-fiber bundle $\text{Ad}(E)\vert_U\, \subset\, \text{ad}(E)\vert_U$. This connection on
$\text{Ad}(E)\vert_U$ will be denoted by $D^{\rm Ad}$. Let
$$
{\mathcal T}\, \subset\, \text{Ad}(E)\vert_U
$$
be a torus sub-bundle for the parabolic vector bundle $E_*$.

\begin{definition}\label{d-c-t}
We will say that $D$ \textit{preserves} $\mathcal T$ if the above connection
$D^{\rm Ad}$ on $\text{Ad}(E)\vert_U$ preserves the sub-bundle $\mathcal T$.
\end{definition}

Now consider the set-up of Theorem \ref{thm1}. Let $S_X\, \subset\, X$ be the parabolic
divisor for the parabolic vector bundle $\varphi_*V_*$; recall that it the union of
the ramification points for $\varphi$ and the image of the parabolic divisor for $V_*$.
The complement $X\setminus S_X$ will be denoted by $U$.

The parabolic vector bundle $\varphi_*V_*$ has a ramified torus sub-bundle
\begin{equation}\label{e5}
{\mathcal T}\, \subset\, \text{Ad}(\varphi_* V)\vert_U
\end{equation}
by Proposition \ref{prop2}.

\begin{lemma}\label{lem2}
The connection $\varphi_*D$ on $\varphi_* V$ preserves the ramified torus
sub-bundle $\mathcal T$ in \eqref{e5}.
\end{lemma}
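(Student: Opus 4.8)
The plan is to reduce the statement to the étale situation handled implicitly by Proposition \ref{prop2} and then use the explicit local description of $\mathcal T$. Recall that over $U$ the map $\varphi$ is étale, so near any point of $U$ the direct image $\varphi_*V$ splits as a direct sum $\bigoplus_{y\in\varphi^{-1}(x)}V_y$ locally in $x$, and $\mathcal T$ is exactly the center of $\varphi_*\text{Ad}(V)$, i.e. the torus whose fibers are the diagonal automorphisms with respect to this decomposition (see \eqref{dT} and \eqref{t2}). The induced connection $\varphi_*D$ on $\varphi_*V$ over $U$ is simply the direct image of the regular connection $D$ on $V$, and since $\varphi$ is étale over $U$ this direct-image connection is, in a local trivialization coming from an étale splitting of $\varphi$, the ``block-diagonal'' connection $\bigoplus_y D|_{U_y}$. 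Thus the first step is to record that $\varphi_*D$ over $U$ respects the decomposition $\varphi_*\text{Ad}(V)\subset\text{Ad}(\varphi_*V)$; equivalently, the connection $D^{\mathrm{ad}}$ on $\text{ad}(\varphi_*V)$ induced by $\varphi_*D$ restricts to the direct-image connection on $\varphi_*\text{ad}(V)$, because direct image commutes with the formation of $\text{End}$ in the étale case.

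Next I would argue that $D^{\mathrm{Ad}}$ preserves $\mathcal T$. Since $\mathcal T$ is the center of the group scheme $\varphi_*\text{Ad}(V)$, and since $D^{\mathrm{Ad}}$ is a connection on $\varphi_*\text{Ad}(V)$ (being the restriction of the direct image of the connection $D^{\text{Ad}}_V$ on $\text{Ad}(V)$), horizontal transport preserves the group-scheme structure: the flat sections of $\text{ad}(\varphi_*V)$ near a point of $U$ that lie in $\varphi_*\text{ad}(V)$ form a Lie algebra under bracket, transported to itself, so the corresponding subgroup scheme is preserved, and hence its center $\mathcal T$ is preserved as well. Concretely, one can also see this at the level of the isotypical decomposition: a flat frame for $\varphi_*V$ adapted to the splitting $\bigoplus_y V_y$ exists locally (pull back flat frames of $V$ under an étale section), and in such a frame $\mathcal T$ is literally the constant subgroup of diagonal-block matrices, which is manifestly horizontal. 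Either way, $D^{\mathrm{Ad}}$ preserves $\mathcal T$ over $U$, which is precisely Definition \ref{d-c-t}.

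The only subtlety — and the place I would be most careful — is matching the two descriptions of the connection $\varphi_*D$: the abstract one in Lemma \ref{lem1} (direct image of $D$ followed by the projection-formula identification \eqref{f6}) and the concrete block-diagonal one over $U$. Over $U$ the singular divisor $S_0\cup R$ of $D$ does not meet the relevant fibers after passing to $\varphi^{-1}(U)$ minus the parabolic divisor of $V_*$, so $\varphi_*D$ is regular there and the projection formula \eqref{f6} degenerates to the obvious isomorphism $\varphi_*(V\otimes K_Y)=(\varphi_*V)\otimes K_X$ on $U$; under this isomorphism $\varphi_*D$ is indeed $\bigoplus_y D|_{U_y}$ in an étale-split local frame. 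Once this identification is in hand the preservation statement is immediate. One further remark worth making explicitly is that, although $\mathcal T$ is only defined over $U$, Definition \ref{d-c-t} only asks for preservation over $U$, so there is nothing to check at the parabolic points; the ramified extension of $\mathcal T$ furnished by Proposition \ref{prop2} plays no further role here.

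\begin{proof}
Over the open set $U\,=\, X\setminus S_X$ the morphism $\varphi$ is \'etale, and the singular divisor $S_0\cup R$ of the connection $D$ on $V_*$ (where $S_0$ is the parabolic divisor of $V_*$) does not meet $\varphi^{-1}(U)$. Hence the logarithmic connection $\varphi_*D$ of Lemma \ref{lem1} is regular on $U$, and on $U$ the projection formula \eqref{f6} reduces to the canonical isomorphism $\varphi_*(V\otimes K_Y)\,=\,(\varphi_*V)\otimes K_X$. Choosing, locally near a point $x\,\in\, U$, local sections of $\varphi$ over $\varphi^{-1}(x)$, we obtain a local decomposition $\varphi_*V\,=\,\bigoplus_{y\in\varphi^{-1}(x)}V_y$ as in \eqref{t2}, and under this decomposition $\varphi_*D$ is the block-diagonal connection $\bigoplus_{y}D|_{U_y}$, where $U_y$ is the component of $\varphi^{-1}(U)$ through $y$.

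Consequently the connection $D^{\mathrm{Ad}}$ induced by $\varphi_*D$ on $\text{Ad}(\varphi_*V)\vert_U$ is the direct image of the connection induced by $D$ on $\text{Ad}(V)$; equivalently, the induced logarithmic connection on $\text{ad}(\varphi_*V)\vert_U$ restricts, on the subbundle $\varphi_*\text{ad}(V)\vert_U\,\subset\,\text{ad}(\varphi_*V)\vert_U$, to $\varphi_*(D^{\mathrm{ad}}_V)$. In particular $D^{\mathrm{Ad}}$ is a connection on the group scheme $\varphi_*\text{Ad}(V)\vert_U$: horizontal transport preserves the fibrewise group structure because it is induced by the transport of $D^{\mathrm{ad}}_V$, which is compatible with the bracket on $\text{ad}(V)$.

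Now $\mathcal T$ in \eqref{e5} is by construction \eqref{dT} the center of $\varphi_*\text{Ad}(V)\vert_U$. Since $D^{\mathrm{Ad}}$ preserves the group scheme $\varphi_*\text{Ad}(V)\vert_U$, it preserves its center, so $D^{\mathrm{Ad}}$ preserves $\mathcal T$. Concretely, in the block-diagonal local frame above, $\mathcal T$ is the constant subgroup of automorphisms acting by a scalar on each summand $V_y$, and this constant subgroup is evidently horizontal for the block-diagonal connection $\bigoplus_y D|_{U_y}$. By Definition \ref{d-c-t} this says exactly that the connection $\varphi_*D$ preserves the ramified torus sub-bundle $\mathcal T$.
\end{proof}
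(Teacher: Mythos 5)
Your proof is correct and follows the same route the paper takes: the paper's own proof is the single remark that the claim is a straightforward consequence of the definition of the push-forward of a connection by an \'etale morphism, and your argument is exactly the elaboration of that remark (local block-diagonal splitting of $\varphi_*D$ over $U$, with $\mathcal T$ the horizontal center of $\varphi_*\mathrm{Ad}(V)$). Nothing is missing; you have simply written out the details the authors left implicit.
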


\begin{proof}
It is a straight-forward consequence of the definition of the push-forward of a
connection by an \'etale morphism.
\end{proof}

\section{Characterization of direct images of parabolic bundles and connections}

\subsection{Characterization of direct images of parabolic bundles}\label{se5.1}

In Section \ref{se2} we constructed a parabolic structure on the direct image
of a vector bundle with parabolic structure, and in Proposition \ref{prop2} it
was shown that the parabolic bundle thus obtained is equipped with
a ramified torus sub-bundle, provided all the parabolic weights of the initial
parabolic bundle are rational numbers. We will now prove a converse of it.

Let $X$ be a connected smooth complex projective curve and $S\,:=\, \{x_1,\, \cdots,\, x_n\}
\, \subset\, X$ a
reduced effective divisor. Let $E_*$ be a parabolic vector bundle over $X$,
with parabolic structure over $S$ and underlying vector bundle $E$, such that
for every $x_i\, \in\, S$,
$$
E_{x_i}\,=\, F^i_1 \, 
\supsetneq\, F^i_2\, \supsetneq\, \cdots \, \supsetneq\, F^i_{\ell_i}\, \supsetneq\, 
F^i_{\ell_i+1}\,=\, 0\, .
$$
is the quasi-parabolic filtration and
$$
0\, \leq \, \lambda^i_1\, <\, \lambda^i_2\, 
<\, \cdots\, < \, \lambda^i_{\ell_i-1}\, < \, \lambda^i_{\ell_i} \, <\, 
\lambda^i_{\ell_i+1}\,=\, 1\, .
$$
are the corresponding parabolic weights.

All $\lambda^i_j$, $1\, \leq\, j\, \leq\,
\ell_i$, $1\,\leq\, i\, \leq\, n$, are assumed to be rational numbers.

\begin{proposition}\label{prop3}
Let ${\mathcal T}\, \subset\, {\rm Ad}(E)\vert_U$ be a ramified torus sub-bundle
for $E_*$, where $U\, :=\, X\setminus S$. Then there is a finite ramified covering
$$
\varphi\ :\, Y\, \longrightarrow\, X
$$
and a parabolic vector bundle $V_*$ on $Y$ with rational parabolic weights, such that
the parabolic vector bundle $\varphi_*V_*$ is isomorphic to $E_*$. Moreover the
isomorphism between $\varphi_*V_*$ and $E_*$ can be so chosen that it takes
$\mathcal T$ to the ramified torus sub-bundle for $\varphi_*V_*$ given by
Proposition \ref{prop2}.
\end{proposition}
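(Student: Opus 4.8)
The plan is to exploit the equivalence between parabolic bundles with rational weights and equivariant bundles on a ramified Galois cover, together with the strengthened form of Definition \ref{drts} discussed after it: since $\mathcal T$ is a ramified torus sub-bundle for $E_*$, the Bundgaard--Nielsen--Fox theorem allows us to choose a single ramified Galois covering $\gamma\, :\, \widetilde X\, \longrightarrow\, X$, \'etale over $U$, with Galois group $\Gamma$, together with a $\Gamma$-equivariant vector bundle $\widetilde E$ on $\widetilde X$ corresponding to $E_*$, such that $\gamma^*\mathcal T\, \subset\, \text{Ad}(\gamma^*E)\vert_{\gamma^{-1}(U)}$ extends to a torus sub-bundle $\widehat{\mathcal T}\, \subset\, \text{Ad}(\widetilde E)$ over all of $\widetilde X$. (If we fall into one of the two exceptional cases of that theorem, add one or two extra parabolic points with trivial parabolic structure, as indicated in the text.)

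Next I would pass from the torus sub-bundle on $\widetilde X$ to a covering. The key point is that a torus sub-bundle $\widehat{\mathcal T}$ of $\text{Ad}(\widetilde E)$ is exactly the center of a subgroup scheme $\mathcal L\,=\,C(\widehat{\mathcal T})\, \subset\, \text{Ad}(\widetilde E)$ (the centralizer), and the isotypical decomposition for the fibers of $\widehat{\mathcal T}$ gives a well-defined decomposition of $\widetilde E$ \emph{only locally} on $\widetilde X$ — globally the summands may be permuted by monodromy. This monodromy is precisely what produces the covering. Concretely, over $\gamma^{-1}(U)$, or even locally over all of $\widetilde X$, the isotypical summands of a fixed fiber are permuted by $\pi_1$, giving a homomorphism to a finite symmetric (or wreath-product) group; the associated connected covering $\psi\, :\, \widetilde Y\, \longrightarrow\, \widetilde X$ is the space on which the summands become globally defined. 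On $\widetilde Y$ one gets a vector bundle $\widetilde W$ (the "universal summand" bundle) together with a decomposition $\psi^*\widetilde E\,=\,\psi_{\Gamma}$-equivariantly $\bigoplus$ of translates of $\widetilde W$, exactly of the shape appearing in \eqref{tv} in the proof of Proposition \ref{prop1}; in particular $\widetilde E\,=\,\psi_*\widetilde W$ away from the parabolic/ramification locus, and $\widehat{\mathcal T}$ is the center of $\psi_*\text{Ad}(\widetilde W)$ as in \eqref{dT}. One must check that the $\Gamma$-action on $\widetilde X$ lifts compatibly: the composite covering, or rather the fiber product construction, can be arranged so that $\Gamma$ acts on $\widetilde Y$ permuting the summands, making $\widetilde W$ a $\Gamma'$-equivariant bundle for the appropriate group $\Gamma'\,\supset\,\Gamma$.

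Finally I would descend everything back down to $X$. Set $Y\,:=\,\widetilde Y/H$ where $H\,\subset\,\Gamma'$ is the subgroup fixing the chosen summand, so that $Y\, \longrightarrow\, X$ is the desired finite ramified covering $\varphi$, \'etale over $U$ up to the added trivial parabolic points; the equivariant bundle $\widetilde W$ on $\widetilde Y$ descends to a parabolic vector bundle $V_*$ on $Y$ with rational weights via the equivalence of categories. By construction $\varphi_*V$ agrees with $E$ over $U$, and since both $\varphi_*V_*$ (by the construction of Section \ref{se2}) and $E_*$ correspond to the \emph{same} $\Gamma'$-equivariant bundle upstairs — this is the content of the computation around \eqref{tv}--\eqref{t3} in Proposition \ref{prop1}, which identifies the direct-image parabolic structure with the equivariant one — the isomorphism $\varphi_*V\,\cong\,E$ over $U$ extends to an isomorphism of parabolic bundles $\varphi_*V_*\,\cong\,E_*$ over $X$. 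The same identification of equivariant data shows this isomorphism carries the ramified torus sub-bundle of $\varphi_*V_*$ from Proposition \ref{prop2} (which upstairs is the center of $\psi_*\text{Ad}(\widetilde W)$) to $\mathcal T$ (which upstairs is $\widehat{\mathcal T}$, the same object). I expect the main obstacle to be the bookkeeping in the middle step: defining $\widetilde Y$ and the group $\Gamma'$ so that the monodromy permutation of the isotypical summands, the $\Gamma$-action needed for descent to $X$, and the extension of $\widehat{\mathcal T}$ across the ramification points are all simultaneously compatible — in particular checking that the covering obtained from the summand-monodromy and the covering $\gamma$ can be combined into one Galois tower on which $\widetilde W$ is genuinely equivariant and the decomposition of $\psi^*\widetilde E$ has the form \eqref{tv}.
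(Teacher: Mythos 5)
Your overall skeleton is the paper's: pass to a Galois cover $\gamma:\widetilde X\to X$ (Bundgaard--Nielsen--Fox) on which $E_*$ becomes an equivariant bundle $\widetilde E$ and $\mathcal T$ extends to a torus sub-bundle $\widehat{\mathcal T}\subset \mathrm{Ad}(\widetilde E)$; build from the fibrewise isotypical decomposition a covering parametrizing the summands, with a tautological summand bundle on it; descend by the Galois action to get $(Y,\varphi,V_*)$; and identify $\varphi_*V_*$ with $E_*$ through the equivariant correspondence. Up to that outline you and the paper agree, including the first step verbatim.

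The genuine gap is exactly the middle/descent step you flag but do not resolve, and your proposed fix is wrong in general. First, the space of isotypical summands need not be a \emph{connected} covering and the monodromy on the summands need not be transitive, so ``the associated connected covering $\widetilde Y$'' and the recipe $Y:=\widetilde Y/H$ with $H\subset\Gamma'$ ``fixing the chosen summand'' do not produce the right $Y$: already for $E=L_1\oplus L_2$ with $L_1\not\cong L_2$ and $\mathcal T$ the diagonal torus, $Y$ must be two disjoint copies of $X$, which no choice of a single summand-stabilizer yields (the paper allows $Y$ disconnected for precisely this reason). Second, the auxiliary group $\Gamma'\supset\Gamma$ is never constructed, and the claimed decomposition of $\psi^*\widetilde E$ into translates of one bundle $\widetilde W$ of the shape \eqref{tv} requires a Galois closure and transitivity and is not actually needed. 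The paper's device that settles all of this at once is to realize the summand space algebraically: inside the commutative algebra bundle $\mathcal A=\mathrm{Lie}(\widehat{\mathcal T})\subset\mathrm{End}(\widetilde E)$ of \eqref{cA} it takes the scheme $Z_0=\{z^2=z\}$ of \eqref{z0} and its union of components $Z$ consisting of minimal nonzero idempotents, with projection $\psi$ as in \eqref{psi}. This $Z$ is automatically an \'etale cover of \emph{all} of $\widetilde X$ (no separate argument at the ramification points, no appeal to monodromy or Riemann existence), it carries the tautological summand bundle $W\subset\psi^*\widetilde E$ of \eqref{W}, and it is automatically invariant under the original Galois group $G=\mathrm{Gal}(\gamma)$ because $\widehat{\mathcal T}$ is --- so one simply sets $Y:=Z/G$, takes $V_*$ to be the parabolic bundle corresponding to the $G$-equivariant bundle $W$, and no choice of summand, no enlarged group, and no transitivity hypothesis enter. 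With that replacement your final paragraph (identification of $\varphi_*V_*$ with $E_*$ over $U$ via the common description of the fibres as $\bigoplus_{z\in\psi^{-1}(y)}z(\widetilde E_y)$, extension across $S$, and matching of $\mathcal T$ with the torus of \eqref{dT}) goes through as you describe.
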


\begin{proof}
There is a (ramified) Galois covering
$$
\phi\, :\, \widetilde{X}\, \longrightarrow\, X
$$
and a $\text{Gal}(\phi)$--equivariant vector bundle $\widetilde E$ on $\widetilde X$,
such that
\begin{itemize}
\item the parabolic vector bundle $E_*$ corresponds to the
$\text{Gal}(\phi)$--equivariant vector bundle $\widetilde E$, and

\item the torus sub-bundle $\phi^*{\mathcal T} \, \subset\,
\text{Ad}(\widetilde{E})\vert_{\phi^{-1}(U)}$ extends to a torus sub-bundle
\begin{equation}\label{te}
\widetilde{\mathcal T}\, \subset\, \text{Ad}(\widetilde{E})
\end{equation}
over entire $\widetilde X$.
\end{itemize}
For notational convenience, the Galois group $\text{Gal}(\phi)$ will be denoted
by $G$.

The Lie algebra bundle on $\widetilde X$ for the group scheme $\widetilde{\mathcal T}$
will be denoted by $\mathcal A$. So $\mathcal A$ is also a sub-algebra bundle of the
associative algebra bundle on $\widetilde X$
\begin{equation}\label{cA}
{\mathcal A}\,=\, \text{Lie}(\widetilde{\mathcal T})\,\subset\, \text{Lie}(\text{Ad}(\widetilde{E}))
\,=\, {\rm End}(\widetilde{E})\, ;
\end{equation}
the Lie algebra structure is given by the commutator. The total space of the
associative algebra bundle $\mathcal A$ will also be denoted by $\mathcal A$. Let
\begin{equation}\label{z0}
Z_0\, \subset\, \mathcal{A}
\end{equation}
be the sub scheme defined by the equation $z^2-z\,=\, 0$. So $Z_0$ is the locus of all
idempotent elements in the associative algebra bundle $\mathcal A$. It is
straight-forward to check that $Z_0$ is an \'etale cover of $\widetilde{X}$. This
$Z_0$ is smooth but not connected; note that the zero section of $\mathcal A$ is a
connected component of $Z_0$.

Let
$$
Z\, \subset\, Z_0
$$
be the locus of nonzero elements that are not nontrivial sum of other elements of $Z_0$.
This $Z$ is in fact a union of some connected components of $Z_0$. Let
\begin{equation}\label{psi}
\psi\, :\, Z\, \longrightarrow\, \widetilde{X}
\end{equation}
be the restriction of the natural projection of $\mathcal A$ to $\widetilde X$.

Note that for any $x\, \in\, \widetilde{X}$, we have the direct sum decomposition
\begin{equation}\label{deE}
\widetilde{E}_x\,=\, \bigoplus_{z\in\, \psi^{-1}(x)} z(\widetilde{E}_x)\, ,
\end{equation}
where $\psi$ is the projection in \eqref{psi};
recall that $z$ is an idempotent endomorphism of $\widetilde{E}_x$. From \eqref{deE}
it follows that the pulled back vector bundle $\psi^*\widetilde{E}$ on $Z$ has a
tautological sub-bundle
\begin{equation}\label{W}
W\, \subset\, \psi^*\widetilde{E}\, \longrightarrow\, Z
\end{equation}
whose fiber over any $z\, \in\, Z$ is the image $z(\widetilde{E}_{\psi(z)})\,\subset\,
\widetilde{E}_{\psi(z)}\,=\, (\psi^*\widetilde{E})_z$. To describe $W$ in another
way, note that since $Z$ is contained in the total space of $\text{End}(\widetilde{E})$,
there is a tautological homomorphism
$$
\psi^*\widetilde{E}\, \longrightarrow\, \psi^*\widetilde{E}\, .
$$
The sub-bundle $W$ in \eqref{W} is the image of this tautological homomorphism.

The sub-bundle $W$ of $\psi^*\widetilde{E}$
is a direct summand. In fact, it has a tautological complement $W^c$ whose
fiber over any $z\, \in\, Z$ is
$$
W^c_z\,=\, \text{kernel}(z)\,=\, 
\bigoplus_{y\in\, \psi^{-1}(\psi(z))\setminus\{z\}} y(\widetilde{E}_{\psi(z)})
\, \subset\, (\psi^*\widetilde{E})_z\, .
$$
It should be clarified that, in general, the decomposition in \eqref{deE} does not
produce any decomposition of $W^c$, because the direct summands in the fiber
$W^c_z$ may get interchanged as $z$ runs over a loop in $Z$.

Consider the action of $\text{Gal}(\phi)\,=\, G$ on $\text{End}(\widetilde{E})$ induced by the 
action of $G$ on $\widetilde E$. It can be shown that both $\widetilde{\mathcal T}$ and 
$\mathcal A$ (see \eqref{cA}) are preserved by this action of $G$ on 
$\text{End}(\widetilde{E})$. Indeed, this follows immediately from the fact that 
$\widetilde{\mathcal T}\vert_{\phi^{-1}(U)}$ is pulled back from $X$. The action of $G$ on 
$\mathcal A$ evidently preserves $Z_0$ in \eqref{z0}. The action of $G$ on $Z_0$ clearly 
preserves $Z$ in \eqref{psi}. The map $\psi$ in \eqref{psi} is evidently $G$--equivariant.

The action of $G$ on $\widetilde E$ pulls back to an action of
$G$ on $\psi^*{\widetilde E}$ such that the projection map from $\psi^*{\widetilde E}$ to
$Z$ intertwines the actions of $G$. The sub-bundle $W$ in \eqref{W} is clearly
preserved by this action of $G$ on $\psi^*{\widetilde E}$. 
Consequently, the vector bundle $W$ on $Z$ is $G$--equivariant.

The quotient $Z/G$ will be denoted by $Y$. It was noted above $\psi$ is
$G$--equivariant. Therefore, the map $\psi$ descends to a map
$$
\varphi\, :\, Y\, \longrightarrow\, X\, .
$$
More precisely, we have a commutative diagram of morphisms
$$
\begin{matrix}
Z & \stackrel{\psi}{\longrightarrow} & \widetilde{X}\\
~\Big\downarrow q && ~ \Big\downarrow\phi\\
Y & \stackrel{\varphi}{\longrightarrow} & X
\end{matrix}
$$
where $q$ is the quotient map to $Z/G$. 

Let $V_*$ be the parabolic vector bundle on $Y$ associated to the above
$G$--equivariant vector bundle $W$ on $Z$ \cite{Bi1}, \cite{Bo1}.
Note that the parabolic points of $V_*$ are contained in $\varphi^{-1}(S)$.

Let any point $x\, \in\, U\,=\, X\setminus S$. The fibers of both $\varphi_*V_*$ and
$E_*$ over $x$ are identified with
$$
\bigoplus_{z\in\, \psi^{-1}(y)} z(\widetilde{E}_y)\, ,
$$
where $y$ is any point of $\phi^{-1}(x)$; note that for different choices
of $y$, the corresponding direct sums get identified using the action of $G$.
In other words, on $U$, the vector bundles $E_*$ and $\varphi_*V_*$
are canonically identified. This isomorphism clearly takes the torus sub-bundle
${\mathcal T}\, \subset\, \text{Ad}(E)\vert_U$ to the torus over sub-bundle
of $\text{Ad}(\varphi_*V_*)\vert_U$ given by Proposition \ref{prop2}.

Using the above constructions, it is straight-forward to check that this isomorphism 
between $E_*$ and $\varphi_*V_*$ over $U$ extends to an isomorphism between the 
parabolic vector bundles $E_*$ and $\varphi_*V_*$ over $X$. It was already noted that
this isomorphism takes ${\mathcal T}$ to the torus sub-bundle
of $\text{Ad}(\varphi_*V_*)\vert_U$ given by Proposition \ref{prop2}.
\end{proof}

\begin{remark}\label{rem1}
Let $\phi^1\, :\, Z\, \longrightarrow\, X$ be a ramified covering map, where $X$
is a connected smooth complex projective curve. Let $V^1_*$ be a parabolic vector
bundle on $Z$ with rational parabolic weights. We have the parabolic vector
bundle $\phi^1_* V^1_*$ on $X$ constructed in Section \ref{se2}. Since all the
parabolic weights of $V^1_*$ are rational numbers, the parabolic
weights of $\phi^1_* V^1_*$ are rational too. Let
$$
S\, \subset\, X
$$
be the parabolic divisor for $\phi^1_* V^1_*$.

The ramified
torus sub-bundle for $\phi^1_* V^1_*$ constructed in Proposition \ref{prop2}
will be denoted by ${\mathcal T}^1$.

Consider the pair $(\phi^1_* V^1_*,\, {\mathcal T}^1)$. Using it, Proposition \ref{prop3}
produces a ramified covering map
$$
\varphi\,:\, Y\, \longrightarrow\, X
$$
and a parabolic bundle $V_*$ on $Y$ (see the proof of Proposition \ref{prop3}).
Consider the two coverings of $X\setminus S$
$$
\phi^1_0\, :\, (\phi^1)^{-1}(X\setminus S)\, \longrightarrow\, X\setminus S\ \
\text{ and }\ \ \varphi_0\, :\, \varphi^{-1}(X\setminus S)\,
\longrightarrow\, X\setminus S\, ,
$$
where $\phi^1_0$ (respectively, $\varphi_0$) is the restriction of 
$\phi^1$ (respectively, $\varphi$) to the Zariski open subset $(\phi^1)^{-1}(X\setminus S)$
(respectively, $\varphi^{-1}(X\setminus S)$) of $Z$ (respectively, $Y$). It is
straight-forward to check that these two coverings of $X\setminus S$ are canonically
identified. In other words, we have commutative diagram of maps
$$
\begin{matrix}
(\phi^1)^{-1}(X\setminus S) & \stackrel{\delta}{\longrightarrow} &
\varphi^{-1}(X\setminus S) \\
~\Big\downarrow \phi^1_0 && ~ \Big\downarrow\varphi_0\\
X\setminus S & \stackrel{\rm Id}{\longrightarrow} & X\setminus S
\end{matrix}
$$
where $\delta$ is an isomorphism. Since $(\phi^1)^{-1}(X\setminus S)$
and $\varphi^{-1}(X\setminus S)$ are Zariski open dense in $Z$ and $Y$ respectively,
the above map $\delta$ extends to an isomorphism
$$
\widehat{\delta}\, :\, Z\, \longrightarrow\, Y
$$
such that the diagram
\begin{equation}\label{wdi}
\begin{matrix}
Z & \stackrel{\widehat\delta}{\longrightarrow} & Y \\
~\Big\downarrow \phi^1 && ~ \Big\downarrow\varphi\\
X & \stackrel{\rm Id}{\longrightarrow} & X
\end{matrix}
\end{equation}
is commutative.

The pulled back vector bundle $\delta^* (V_*\vert_{\varphi^{-1}(X\setminus S)})$
is identified with $V^1_*\vert_{(\phi^1){-1}(X\setminus S)}$. Note that the
parabolic divisor for the parabolic vector bundle $V_*$ (respectively, $V^1_*$)
is contained in $\varphi^{-1}(S)$ (respectively, $(\phi^1)^{-1}(S)$). It can checked
that this identification extends to an isomorphism between the parabolic vector bundles
${\widehat\delta}^*V_*$ and $V^1_*$ over $Z$, where $\widehat\delta$ is the isomorphism
in \eqref{wdi}.
\end{remark}

Combining the construction in Section \ref{se2} with Proposition \ref{prop2},
Proposition \ref{prop3} and Remark \ref{rem1}, we have the following theorem.

\begin{theorem}\label{thm2}
Let $E_*$ be a parabolic vector bundle on a connected smooth complex projective curve $X$
with parabolic divisor $S$ and rational parabolic weights. Then there is a natural
equivalence between the following two classes:
\begin{enumerate}
\item Triples of the form $(Y,\, \varphi,\, V_*)$, where $\varphi\, :\, Y\, \longrightarrow
\, X$ is a ramified covering map, and $V_*$ is a parabolic vector bundle on $Y$, such that
$\varphi_*V_*\, =\, E_*$.

\item Ramified torus bundles for $E_*$.
\end{enumerate}
\end{theorem}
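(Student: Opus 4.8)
The plan is to package Proposition \ref{prop2}, Proposition \ref{prop3} and Remark \ref{rem1} into a pair of mutually inverse maps between the two classes. First I would make the two classes precise: two triples $(Y,\varphi,V_*)$ and $(Y',\varphi',V'_*)$ with $\varphi_*V_*\,=\,E_*\,=\,\varphi'_*V'_*$ are to be identified when there is an isomorphism $g\colon Y\,\longrightarrow\,Y'$ over $X$ (that is, $\varphi'\circ g\,=\,\varphi$) carrying $V_*$ to $V'_*$ and inducing the identity of $E_*$ on the direct images; the second class consists of ramified torus sub-bundles ${\mathcal T}\,\subset\,\text{Ad}(E)\vert_U$ as literal sub-bundles. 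A preliminary remark is that every triple in the first class automatically has rational parabolic weights on $V_*$: over $U$ the weights of $\varphi_*V_*$ equal those of $V_*$, and over a ramification point of multiplicity $b_k$ the weights of $\varphi_*V_*$ are the numbers $(c+\lambda^d_{y_k})/b_k$ of Section \ref{se2}, so rationality of the weights of $E_*$ forces that of $V_*$; hence Proposition \ref{prop2} applies to every triple.

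Next I would define $\Phi$ on the first class by sending $(Y,\varphi,V_*)$ to the ramified torus sub-bundle of $\text{Ad}(\varphi_*V)\vert_U\,=\,\text{Ad}(E)\vert_U$ furnished by Proposition \ref{prop2}, and $\Psi$ on the second class by sending ${\mathcal T}$ to the triple $(Y,\varphi,V_*)$ produced in the proof of Proposition \ref{prop3}, regarded as an element of the first class via the isomorphism $\varphi_*V_*\,\cong\,E_*$ constructed there. That $\Phi$ descends to identifications of triples is immediate: the sub-bundle ${\mathcal T}$ of \eqref{t11} is built from the tautological decomposition $(\varphi_*V)_x\,=\,\bigoplus_{y\in\varphi^{-1}(x)}V_y$ over $U$, which is functorial in $(\varphi,V)$, so an isomorphism of triples sends one torus sub-bundle onto the other inside $\text{Ad}(E)\vert_U$. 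For $\Psi$ one notes that the auxiliary data in the proof of Proposition \ref{prop3} (the Galois covering $\phi\colon\widetilde X\,\longrightarrow\,X$ and the extension $\widetilde{\mathcal T}$) only matter up to a common refinement, so its output is well defined up to identification of triples.

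It then remains to check the two composites. For $\Phi\circ\Psi\,=\,\text{id}$: given ${\mathcal T}$, Proposition \ref{prop3} provides the triple $\Psi({\mathcal T})\,=\,(Y,\varphi,V_*)$ together with an isomorphism $\varphi_*V_*\,\cong\,E_*$ which, by the ``moreover'' clause of that proposition, takes ${\mathcal T}$ exactly to the ramified torus sub-bundle of $\text{Ad}(\varphi_*V_*)\vert_U$ given by Proposition \ref{prop2}; but that sub-bundle is $\Phi(\Psi({\mathcal T}))$ by definition, so transporting back along the chosen isomorphism yields $\Phi(\Psi({\mathcal T}))\,=\,{\mathcal T}$. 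For $\Psi\circ\Phi\,=\,\text{id}$: given a triple $(Z,\phi^1,V^1_*)$, its image ${\mathcal T}^1\,=\,\Phi(Z,\phi^1,V^1_*)$ is precisely the ramified torus sub-bundle ${\mathcal T}^1$ of Remark \ref{rem1}, and that remark supplies an isomorphism $\widehat\delta\colon Z\,\longrightarrow\,Y$ fitting into the commutative square \eqref{wdi} with $\widehat\delta^*V_*\,\cong\,V^1_*$, i.e.\ an identification of $(Z,\phi^1,V^1_*)$ with $\Psi(\Phi(Z,\phi^1,V^1_*))\,=\,(Y,\varphi,V_*)$. Thus $\Phi$ and $\Psi$ are mutually inverse bijections, which is the asserted equivalence; its naturality --- compatibility with the purely local character of the constructions recalled in Section \ref{se3} and with base change on $X$ --- then follows formally. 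All the substance lives in Propositions \ref{prop2}--\ref{prop3} and Remark \ref{rem1}; the only real obstacle here is not geometric but is the bookkeeping of fixing the notion of identification of triples tightly enough that $\Phi$ and $\Psi$ descend to it and that ``$\Phi\circ\Psi\,=\,\text{id}$'' is an equality of sub-bundles rather than merely an isomorphism.
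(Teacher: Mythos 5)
Your proposal is correct and follows essentially the same route as the paper, which proves Theorem \ref{thm2} simply by combining the direct-image construction of Section \ref{se2} with Proposition \ref{prop2} (giving your map $\Phi$), Proposition \ref{prop3} and its ``moreover'' clause (giving $\Psi$ and $\Phi\circ\Psi=\mathrm{id}$), and Remark \ref{rem1} (giving $\Psi\circ\Phi=\mathrm{id}$). Your additional bookkeeping --- the explicit equivalence relation on triples, the observation that rationality of the weights of $E_*$ forces that of $V_*$, and the well-definedness of $\Psi$ up to common refinement of the auxiliary Galois cover --- is a faithful elaboration of what the paper leaves implicit.
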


\subsection{Characterization of direct images of connections}

As in Section \ref{se5.1}, $X$ is a connected smooth complex projective curve and 
$S\, \subset\, X$ a reduced effective divisor. Let 
$E_*$ be a parabolic vector bundle over $X$, with parabolic structure over $S$ and 
underlying vector bundle $E$, such that all the parabolic weights are rational 
numbers. Let
\begin{equation}\label{rt}
{\mathcal T}\, \subset\, {\rm Ad}(E)\vert_U
\end{equation}
be a ramified torus 
sub-bundle for $E_*$, where $U\, :=\, X\setminus S$. Proposition
\ref{prop3} says that there is a finite ramified covering
$$
\varphi\ :\, Y\, \longrightarrow\, X
$$
and a parabolic vector bundle $V_*$ on $Y$ with rational parabolic weights, such that
\begin{itemize}
\item the parabolic vector bundle $\varphi_*V_*$ is isomorphic to $E_*$, and

\item the
isomorphism between $\varphi_*V_*$ and $E_*$ can be so chosen that it takes
$\mathcal T$ to the ramified torus sub-bundle of $\varphi_*V_*$ given by
Proposition \ref{prop2}.
\end{itemize}

\begin{proposition}\label{prop4}
Let $D$ be a connection on the parabolic vector bundle $E_*$ such that $D$
preserves the ramified torus sub-bundle $\mathcal T$ in \eqref{rt}. Then
$D$ produces a connection $D'$ on the above parabolic vector bundle $V_*$ such that
the connection $\varphi_* D'$ on $\varphi_*V_*$ (see Theorem \ref{thm1}
for $\varphi_* D'$) coincides with the connection $D$ on $E_*$.
\end{proposition}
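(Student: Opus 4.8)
The plan is to work locally and transfer the connection through the equivalence between parabolic bundles and equivariant bundles, exactly as in the proof of Proposition \ref{prop3}. Recall that there we produced a Galois covering $\phi\, :\, \widetilde X\, \longrightarrow\, X$ with group $G$, a $G$--equivariant bundle $\widetilde E$ on $\widetilde X$ corresponding to $E_*$, and a torus sub-bundle $\widetilde{\mathcal T}\, \subset\, \text{Ad}(\widetilde E)$ extending $\phi^*\mathcal T$; the covering $\psi\, :\, Z\, \longrightarrow\, \widetilde X$ was the scheme of ``indecomposable'' idempotents in the Lie algebra bundle $\mathcal A\,=\, \text{Lie}(\widetilde{\mathcal T})$, the bundle $W\, \subset\, \psi^*\widetilde E$ was the tautological image sub-bundle, and $V_*$ on $Y\,=\, Z/G$ was the parabolic bundle associated to the $G$--equivariant bundle $W$. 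So the first step is: the connection $D$ on $E_*$ corresponds, under the parabolic--equivariant dictionary, to a $G$--invariant logarithmic connection $\widetilde D$ on $\widetilde E$ singular over $\phi^{-1}(S)$ (this is standard; a connection on a parabolic bundle with rational weights is the same as an equivariant logarithmic connection on the corresponding equivariant bundle, with residues dictated by the weights). Pull $\widetilde D$ back to get a logarithmic connection $\psi^*\widetilde D$ on $\psi^*\widetilde E$ over $Z$; it is $G$--equivariant for the combined action on $Z$ and $\psi^*\widetilde E$.

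The second step is to check that $\psi^*\widetilde D$ restricts to a connection $D_W$ on the sub-bundle $W\, \subset\, \psi^*\widetilde E$. This is where the hypothesis that $D$ preserves $\mathcal T$ enters, and it is the heart of the argument. The point is that the induced connection on $\text{End}(\widetilde E)$ preserves $\widetilde{\mathcal T}$ over $\phi^{-1}(U)$ by hypothesis, hence preserves the associative subalgebra bundle $\mathcal A$ (it is the fibrewise algebra generated by $\widetilde{\mathcal T}$, or equivalently the centralizer data), hence the horizontal sections of $\text{End}(\widetilde E)$ through points of $\mathcal A$ stay in $\mathcal A$; over $U$ this forces the flat family of idempotents to be preserved, so the connection on $\psi^*\text{End}(\widetilde E)$ lifts the identity on $Z$ infinitesimally and therefore preserves the tautological idempotent section, i.e. it preserves $W\,=\, $ image and $W^c\,=\, $ kernel. (Near the parabolic points one argues on a Zariski/analytic neighborhood $U_y$ and its local equivariant cover, using the extension property in Definition \ref{drts} to see that $\psi^*\widetilde{\mathcal T}$ and hence the idempotents extend flatly; the residues of $\psi^*\widetilde D$ on $W$ are then controlled by the residues of $\widetilde D$.) This gives a $G$--equivariant logarithmic connection $D_W$ on the $G$--equivariant bundle $W$ on $Z$.

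The third step is to descend. By the parabolic--equivariant equivalence applied to the covering $q\, :\, Z\, \longrightarrow\, Y\,=\, Z/G$ — or rather to the composite covering over $X$ — the $G$--equivariant logarithmic connection $D_W$ on $W$ corresponds to a connection $D'$ on the parabolic bundle $V_*$ on $Y$; one must check that the residue conditions of Definition \ref{dlc} hold for $D'$, which follows because the residues of $D_W$ at points of $\psi^{-1}(\phi^{-1}(S))$ are computed from the residues of $\widetilde D$ (which satisfy the parabolic conditions for $E_*$) together with the ramification indices of $\psi$ and $\phi$, exactly the bookkeeping already carried out in the proof of Theorem \ref{thm1}. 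Finally, one verifies $\varphi_* D'\,=\, D$: over $U$ this is immediate because there $E_*$ and $\varphi_*V_*$ are canonically identified (as noted at the end of the proof of Proposition \ref{prop3}) and $\varphi_* D'$ is, by Lemma \ref{lem2} and the definition of push-forward of a connection along an \'etale map, the connection induced from $D_W\,=\, \psi^*\widetilde D|_W$, which matches $\widetilde D$ and hence $D$; since both $\varphi_* D'$ and $D$ are logarithmic connections on the same bundle agreeing on the dense open set $U$, they agree everywhere.

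The main obstacle is the second step: showing that $\mathcal T$--preservation by $D$ really does force $\psi^*\widetilde D$ to preserve the tautological sub-bundle $W$, including at the parabolic points where $\mathcal T$ only extends after passing to the local equivariant cover and where $W$ has a nontrivial parabolic structure. I would handle this by first doing the calculation over $U$ (where everything is a genuine torus bundle and the idempotents form a flat finite family), and then, for each parabolic point, pulling everything back to the local Galois cover $\widetilde U_y$ supplied by Definition \ref{drts}, where $\widetilde{\mathcal T}$, $\mathcal A$, the idempotent scheme, and $W$ all extend as honest bundles; the extended connection there is logarithmic with residues determined by those of $\widetilde D$, and equivariant descent back to $U_y$ then yields the parabolic connection $D'$ with the correct weights. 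Everything else is routine once this local picture is in place, since it reduces, as in Theorem \ref{thm1}, to the rank-one computation of how residues transform under tensoring by ${\mathcal O}(-c\,y_k)$ and under a totally ramified cover of degree $b_k$.
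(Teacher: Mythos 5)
Your proposal matches the paper's own proof essentially step for step: transfer $D$ to the $G$--invariant connection $\widetilde D$ on $\widetilde E$, use the hypothesis (extended by density from $\phi^{-1}(U)$ to all of $\widetilde X$) to see that the induced connection on $\mathrm{End}(\widetilde E)$ preserves $\widetilde{\mathcal T}$, hence $\mathcal A$, hence the idempotent scheme $Z$ and the tautological sub-bundle $W\subset\psi^*\widetilde E$, then descend the resulting connection by $G$--equivariance to $V_*$ and conclude $\varphi_*D'=D$ by comparison over the dense open set $U$. The only differences are cosmetic: the paper argues directly on the global cover $\widetilde X$, where $\widetilde{\mathcal T}$ already extends by the choice made in Proposition \ref{prop3}, so your additional local analysis near the parabolic points via Definition \ref{drts} is not needed.
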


\begin{proof}
Since $E_*$ corresponds to the $\text{Gal}(\phi)$--equivariant vector bundle
$\widetilde{E}\, \longrightarrow\, \widetilde{X}$ in \eqref{te}, the connection
$D$ on $E_*$ produces a $\text{Gal}(\phi)$--invariant connection on $\widetilde{E}$.
This connection on $\widetilde{E}$ given by $D$ will be denoted by $\widetilde D$.
On $\phi^{-1}(U)$, the vector bundle $\widetilde{E}\vert_{\phi^{-1}(U)}$
is identified with the pullback $\phi^*(E\vert_U)$. Recall that this isomorphism
over $\phi^{-1}(U)$ takes $\phi^*{\mathcal T}$ to $\widetilde{\mathcal T}$
in \eqref{te}.

Let $\widetilde{D}'$ be the connection on the vector bundle $\text{End}(\widetilde{E})$
induced by the above connection $\widetilde D$ on $\widetilde{E}$.
Since $D$ preserves the ramified torus sub-bundle $\mathcal T$ in \eqref{rt}, it follows
that $\widetilde{D}'$ preserves $\widetilde{\mathcal T}$ over $\phi^{-1}(U)$. As
$\phi^{-1}(U)$ is Zariski open dense in $\widetilde X$, this implies that
$\widetilde{D}'$ preserves $\widetilde{\mathcal T}$ over $\widetilde X$. Therefore,
the connection $\widetilde{D}'$ preserves $\mathcal A$ in \eqref{cA}.

Since $\widetilde{D}'$ preserves $\mathcal A$, we conclude that $\widetilde{D}'$
preserves the sub-scheme $Z_0$ in \eqref{z0}, meaning for every $z\, \in\, Z_0$, the
horizontal subspace of $T_z\text{End}(\widetilde{E})$ for the connection
$\widetilde{D}'$ actually lies inside the subspace
$$
T_z Z_0\, \subset\, T_z\text{End}(\widetilde{E})\, .
$$
Note that this implies that the above
horizontal subspace of $T_z\text{End}(\widetilde{E})$ coincides with
$T_z Z_0$, because $Z$ is an \'etale cover of $\widetilde X$. We now conclude that $Z$
in \eqref{psi} is also preserved by $\widetilde{D}'$, because $Z$ is a union of some
connected components of $Z_0$

Moreover, the connection $\psi^*\widetilde{D}$ on $\psi^*\widetilde{E}$
preserves the tautological sub-bundle $W$ in \eqref{W}. Indeed, this follows
from the fact that the connection $\widetilde{D}'$ preserves $\mathcal A$.
Let $D^W$ denote the connection on $W$ given by $\psi^*\widetilde{D}$.

Recall that the connection $\widetilde D$ is preserved by the action of the Galois
group $\text{Gal}(\phi)$ on $\widetilde E$. Hence the connection $\widetilde{D}'$
on $\text{End}(\widetilde{E})$ is also $\text{Gal}(\phi)$--invariant. Consequently,
the above connection $D^W$ on $W$ in preserved by the action $\text{Gal}(\phi)$
on $W$. This implies that $D^W$ produces a connection on the parabolic vector bundle
$V_*$ on $Z/\text{Gal}(\phi)\,=\, Y$.

The above connection on $V_*$ given by $D^W$ produces a connection on the direct 
image $\varphi_*V_*$ (see Theorem \ref{thm1}). This connection on 
$\varphi_*V_*\,=\, E_*$ (see Proposition \ref{prop3}) will be 
denoted by $\widehat D$.

The identification between $\varphi_*V_*$ and $E_*$ takes the restriction of the connection
$D$ to $E\vert_U$ to the restriction of $\widehat D$ to $(\varphi_*V_*)\vert_U$.
Therefore, the connection $D$ on $E_*$ coincides with the connection
$\widehat D$ on $\varphi_*V_*$.
\end{proof}

\begin{remark}\label{rem2}
Consider the set-up of Remark \ref{rem1}. Let $D^1$ be a connection on the
parabolic vector bundle $V^1_*$ over $Z$. Using Theorem
\ref{thm1}, this $D^1$ produces a connection
on the parabolic vector bundle $\phi^1_*V^1_*$ over $X$; the
connection on $\phi^1_*V^1_*$ given by $D^1$ will be denoted by
$\phi^1_* D^1$. From Lemma \ref{lem2} we know that this connection $\phi^1_* D^1$
preserves the ramified torus sub-bundle ${\mathcal T}^1$ in
Remark \ref{rem1} for $\phi^1_*V^1_*$.

Now from Proposition \ref{prop4} and Remark \ref{rem1} we know that $\phi^1_* D^1$
produces a connection on $V^1_*$; this connection on $V^1_*$ will be denoted by $D^2$.
The two connections $D^1$ and $D^2$ on the parabolic vector bundle $V^1_*$ coincide
over the dense open subset $(\phi^1)^{-1}(X\setminus S)$. This implies that
$D^1$ and $D^2$ coincide over $Z$.
\end{remark}

Combining Lemma \ref{lem2}, Proposition \ref{prop4} and Remark \ref{rem2}, we have
the following theorem.

\begin{theorem}\label{thm3}
The equivalence in Theorem \ref{thm2} takes a connection on the parabolic
vector bundle $V_*$ on $Y$ to a connection on $E_*$ that preserves the
ramified torus sub-bundle for $E_*$ corresponding to $(Y,\, \varphi,\, V_*)$. Conversely,
a connection on $E_*$ preserving a ramified torus sub-bundle $\mathcal T$ for $E_*$
is taken to a connection on the parabolic vector bundle $V_*$ on $Y$, where
$(Y,\, \varphi,\, V_*)$ corresponds to $\mathcal T$.
\end{theorem}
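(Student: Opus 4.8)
The plan is to obtain Theorem \ref{thm3} by packaging the three results already established and checking that the two ways of transporting connections are mutually inverse, in a manner compatible with the bundle-level equivalence of Theorem \ref{thm2}.

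In the forward direction I would start with a triple $(Y,\, \varphi,\, V_*)$ satisfying $\varphi_*V_*\,=\,E_*$ together with a connection $D'$ on the parabolic bundle $V_*$. By Theorem \ref{thm1}, the logarithmic connection $\varphi_*D'$ on $\varphi_*V\,=\,E$ is a connection on the parabolic bundle $\varphi_*V_*\,=\,E_*$, and by Lemma \ref{lem2} this connection $\varphi_*D'$ preserves the ramified torus sub-bundle $\mathcal T$ for $E_*$ that Proposition \ref{prop2} attaches to $(Y,\, \varphi,\, V_*)$, which is precisely the torus sub-bundle corresponding to $(Y,\, \varphi,\, V_*)$ under the equivalence of Theorem \ref{thm2}. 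Thus $D'\,\longmapsto\,\varphi_*D'$ sends connections on $V_*$ to connections on $E_*$ preserving $\mathcal T$.

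In the reverse direction I would invoke Proposition \ref{prop4}: given a connection $D$ on $E_*$ preserving the ramified torus sub-bundle $\mathcal T$, and writing $(Y,\, \varphi,\, V_*)$ for the triple that Proposition \ref{prop3} associates to $\mathcal T$, that proposition produces a connection $D'$ on $V_*$ with $\varphi_*D'\,=\,D$ under the identification $\varphi_*V_*\,=\,E_*$. This already shows that the composition ``reverse then forward'' is the identity on the set of connections on $E_*$ preserving $\mathcal T$. It remains to check the other composition, namely that ``forward then reverse'' is the identity on connections on $V_*$; this is exactly the content of Remark \ref{rem2}. The mechanism is that over the dense Zariski open subset $\varphi^{-1}(X\setminus S)\,\subset\,Y$ both the passage $D'\,\longmapsto\,\varphi_*D'$ and the passage back through Proposition \ref{prop4} reduce to the elementary push-forward and pull-back of connections by the \'etale map $\varphi$, so the connection returned on $V_*$ agrees with $D'$ on $\varphi^{-1}(X\setminus S)$; since a connection on a bundle over a smooth curve is determined by its restriction to a dense open subset, the two connections on $V_*$ coincide.

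The one piece of bookkeeping to be careful with is that the ramified torus sub-bundle used as input in the reverse direction must be the one Proposition \ref{prop2} produces from the output of the forward direction; this is guaranteed by the ``moreover'' clause of Proposition \ref{prop3} together with Remark \ref{rem1}, which identifies the covering $Z$ and the parabolic bundle $V^1_*$ reconstructed from $(\phi^1_*V^1_*,\, \mathcal T^1)$ with the original data via the isomorphism $\widehat\delta$. I expect this matching of the torus sub-bundles, rather than any analytic subtlety, to be the main (though essentially routine) obstacle; once it is in place, combining Lemma \ref{lem2}, Proposition \ref{prop4} and Remark \ref{rem2} yields the stated equivalence of connections, naturally compatible with the equivalence of Theorem \ref{thm2}.
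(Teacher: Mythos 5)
Your proposal is correct and follows exactly the paper's route: the paper proves Theorem \ref{thm3} by combining Lemma \ref{lem2}, Proposition \ref{prop4} and Remark \ref{rem2} (with Theorem \ref{thm1}, Propositions \ref{prop2}--\ref{prop3} and Remark \ref{rem1} supplying the compatibility of the torus sub-bundles), which is precisely the packaging you describe. Your additional observation that the two transports of connections are mutually inverse because they agree over the dense open set where $\varphi$ is \'etale is the same mechanism used in Remark \ref{rem2}.
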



\end{document}